\documentclass[11pt,a4paper]{article}
\usepackage{amsmath,amssymb,amsthm,graphicx,braket,multirow,authblk,amsthm,dcolumn,latexsym,subcaption,mathtools,cite,cancel}
\usepackage[normalem]{ulem}
\usepackage{subcaption}
\usepackage[a4paper]{geometry}
\usepackage{hyperref}
\usepackage{enumitem}
\usepackage{bm}

\usepackage{tikz}
\usepackage{pgfplots}
\pgfplotsset{compat=1.18}

\newtheorem{theorem}{Theorem}[section]
\newtheorem{proposition}[theorem]{Proposition}
\newtheorem{remark}[theorem]{Remark}

\newtheorem{corollary}[theorem]{Corollary}
\theoremstyle{definition}

\newtheorem{con}[theorem]{Conjecture}
\def\<{\langle}
\def\>{\rangle}

\newcommand{\Md}{M_d}
\newcommand{\Tr}{\operatorname{Tr}}

\newcommand{\I}{\mathbb{I}}
\date{}

\begin{document}

\title{{\bf Tomiyama-type maps with a diagonal perturbation}} 

\author{ Anindita Bera,$^1$ Bihalan Bhattacharya,$^2$ and Dariusz Chru{\'s}ci{\'n}ski$^2$}
\affil{$^1$Department of Mathematics, Birla Institute of Technology Mesra,\\ Jharkhand 835215, India\\
$^2$Institute of Physics, Faculty of Physics, Astronomy and Informatics, \\Nicolaus Copernicus University, Grudzi\c{a}dzka 5/7, 87--100 Toru{\'n}, Poland}
 

\maketitle

\begin{abstract}
\noindent

We investigate a two-parameter family of linear maps on matrix algebras, constructed as diagonal perturbations of classical Tomiyama maps. Employing the Choi matrix method alongside block-positivity techniques, we derive explicit necessary and sufficient conditions for positivity, complete positivity, and $k$-positivity across arbitrary dimensions. These conditions provide a transparent geometric characterization of the positivity regions within the parameter space.

\end{abstract}



\section{Introduction}

Positive linear maps on matrix algebras are central objects in operator theory \cite{Paulsen2002,Bhatia, Stormer2013} and quantum information \cite{QIT,Watrous, HHHH}. In particular, completely positive maps represent admissible physical transformations whereas positive maps which are not completely positive play a key role in characterizing quantum entangled states of composite systems \cite{HHHH}.

It is already known that the notion of complete  positivity is fully characterized by the Choi theorem \cite{Choi1975}. However, characterization of positive and $k$ positive (for $k>1$) maps is notoriously difficult \cite{Stormer1963,Choi72, Woronowicz, Tomiyama,Tomiyama-3,Kye,Kye-2,Kye-3, MM, Topical,Osaka,bera1,bera2,CMP}. Various constructions of $k$-positive maps were recently analyzed in \cite{CMP,Marciniak, Sun,Frederik}.
Let $M_d$ denote an algebra of $d\times d$ complex matrices and we consider linear maps $\Phi:M_d \to M_d$.  
In Ref. \cite{Tomiyama-II} Jun Tomiyama provided the following classic results:

\begin{theorem}[\cite{Tomiyama-II}] A linear map 
\begin{equation}
    \Phi_{\alpha} = (1-\alpha)\, \mathcal{I} + \alpha \,\tau_0 , ~~~~~-\infty<\alpha <\infty,
\end{equation}
where $\mathcal{I}$ denotes an identity map and $\tau_0$ stands for the normalized trace, i.e. $\tau_0(X) = \frac{\I}{d} \Tr\, X$, is $k$-positive iff

\begin{equation}
    0 \leq \alpha \leq \alpha_k :=  \frac{kd}{kd-1} ,
\end{equation}
for $k=1,\ldots,d$. 
\end{theorem}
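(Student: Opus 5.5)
The plan is to use the standard characterization of $k$-positivity: $\Phi$ is $k$-positive iff $\langle \psi|(\mathcal{I}_k\otimes\Phi)(|\phi\rangle\langle\phi|)|\psi\rangle\ge 0$ for all $\phi,\psi\in\mathbb{C}^k\otimes\mathbb{C}^d$. Equivalently, the Choi matrix $C_\Phi=\sum_{ij}E_{ij}\otimes\Phi(E_{ij})$ must be $k$-block-positive, meaning $\langle x|C_\Phi|x\rangle\ge0$ for every $x\in\mathbb{C}^d\otimes\mathbb{C}^d$ of Schmidt rank at most $k$.

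For $\Phi_\alpha$ the Choi matrix is $C=(1-\alpha)\,d\,P_+ + \frac{\alpha}{d}\,\I\otimes\I$, where $P_+=|\psi_+\rangle\langle\psi_+|$ and $\psi_+=d^{-1/2}\sum_i e_i\otimes e_i$. Take a normalized $x$ with Schmidt rank $\le k$. Its maximal overlap with the maximally entangled state is $\max|\langle\psi_+|x\rangle|^2=k/d$, attained when $x$ is maximally entangled on a $k$-dimensional subspace. This bound follows from Cauchy--Schwarz on the Schmidt coefficients: $|\langle \psi_+|x\rangle|\le d^{-1/2}\sum_{j\le k}s_j\le \sqrt{k/d}$. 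The value $\langle x|C|x\rangle$ is affine in $t=|\langle\psi_+|x\rangle|^2\in[0,k/d]$:
\[
f(t)=(1-\alpha)d\,t+\frac{\alpha}{d}.
\]
Both $t=0$ (a product vector orthogonal to $\psi_+$, available when $d\ge2$) and $t=k/d$ are attained. Hence $k$-positivity is equivalent to $f(0)\ge0$ and $f(k/d)\ge0$.

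Evaluating the endpoints gives the two conditions. First, $f(0)\ge0$ gives $\alpha\ge0$. Second, $f(k/d)=(1-\alpha)k+\alpha/d\ge0$ is equivalent to $\alpha(k-1/d)\le k$, i.e.\ $\alpha\le kd/(kd-1)$. Together these give exactly $0\le\alpha\le\alpha_k$. For $k=d$ this recovers complete positivity via Choi's theorem: the eigenvalues of $C$ are $\alpha/d$ and $(1-\alpha)d+\alpha/d$.

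The main step needing care is the justification of the Schmidt-rank-$k$ characterization of $k$-positivity together with the overlap bound $k/d$. The former is standard: $(\mathcal{I}_k\otimes\Phi)$ positivity is tested on rank-one projectors, and after a transpose or partial isometry this reduces to vectors of Schmidt rank $\le k$ paired with $C_\Phi$. The overlap bound is the elementary Cauchy--Schwarz estimate above. The endpoint $t=0$ is also needed, and it is attained by, e.g., $e_1\otimes e_2$.
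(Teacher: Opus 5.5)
Your proof is correct. Note that the paper gives no proof of this theorem: it quotes it from Tomiyama and then uses it as an input. For example, it relies on the $k$-positivity of $\mathcal{T}_k$ when it asserts that $\mathcal{P}_k$ contains only $k$-positive maps, and again in the $d=\ell k$ argument.

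The closest thing in the paper is the proof of the $\beta\le 0$ proposition. At $\beta=0$ it tests $(\mathcal{I}_k\otimes\Phi_\alpha)(vv^\dagger)$ with $v=\sum_{i\le k}e_i\otimes e_i$ and finds the eigenvalues $\alpha/d$ and $kd-(kd-1)\alpha$ (the latter up to a factor $1/d$). These are exactly your values $f(0)$ and $f(k/d)$, so your necessity argument coincides with the paper's.

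What your route adds is sufficiency. Because $C=(1-\alpha)dP_+ +\frac{\alpha}{d}\,\mathbb{I}\otimes\mathbb{I}$, the quadratic form on Schmidt-rank-$\le k$ vectors depends only on the overlap $t$. The bound $t\le k/d$ then shows that the paper's test vector is in fact the extremal one, so the two endpoint conditions are also sufficient.

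This reduction rests on the $U\otimes\bar U$ symmetry of the isotropic Choi matrix. The diagonal perturbation $\Delta$ breaks that symmetry. That is why, for $\Phi_{\alpha,\beta}$, the paper has to work with ad hoc test vectors and can prove its conjecture only in special cases.

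One phrasing point. The value $t=k/d$ is not attained by every vector that is maximally entangled on a $k$-dimensional subspace. For instance, when $k\ge2$ the vector $k^{-1/2}\sum_{j\le k}e^{2\pi i j/k}\,e_j\otimes e_j$ has $t=0$. You need the alignment $b_j=\bar a_j$, as in $x=k^{-1/2}\sum_{j\le k}e_j\otimes e_j$. Since your argument only needs one attaining vector, this does not affect correctness.
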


\begin{theorem}[\cite{Tomiyama-II}] A linear map

\begin{equation}
    \Lambda_{\mu} = (1-\mu)\, T + \mu \,\tau_0 ,
\end{equation}
where $T$ denotes transposition, is positive iff

\begin{equation}
    0 \leq \mu \leq  \frac{d}{d-1} ,
\end{equation}
and $k$-positive for $k=2,\ldots,d$ iff

\begin{equation}
     \frac{d}{d+1} \leq \mu \leq \frac{d}{d-1} .
\end{equation}
\end{theorem}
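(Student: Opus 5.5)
We treat positivity and $k$-positivity separately.

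For positivity we use that $\Lambda_\mu$ is positive iff $\<y|\Lambda_\mu(|x\>\<x|)|y\>\ge 0$ for all unit vectors $x,y\in\mathbb{C}^d$. Since $T(|x\>\<x|)=|\bar x\>\<\bar x|$, this quantity equals
\[
(1-\mu)|\<y|\bar x\>|^2+\frac{\mu}{d}.
\]
As $|\<y|\bar x\>|^2$ ranges over $[0,1]$ and the expression is affine in it, it suffices to check the two endpoints. The endpoint $0$ gives $\mu\ge 0$. The endpoint $1$ gives $1-\mu+\mu/d\ge 0$, i.e. $\mu\le d/(d-1)$. Both conditions are clearly necessary as well, since both endpoints are attained.

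For $k$-positivity we use the standard criterion: $\Lambda_\mu$ is $k$-positive iff its Choi matrix $C_\mu=\sum_{ij}E_{ij}\otimes\Lambda_\mu(E_{ij})$ is $k$-block positive. Concretely, we need $\<\psi|C_\mu|\psi\>\ge 0$ for all $\psi$ of Schmidt rank at most $k$.

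We compute
\[
C_\mu=(1-\mu)\,\mathbb{F}+\frac{\mu}{d}\,\I\otimes\I,
\]
where $\mathbb{F}=\sum_{ij}E_{ij}\otimes E_{ji}$ is the flip operator. This gives
\[
\<\psi|C_\mu|\psi\>=(1-\mu)\<\psi|\mathbb{F}|\psi\>+\frac{\mu}{d}.
\]
The range of $\<\psi|\mathbb{F}|\psi\>$ over normalized $\psi$ of Schmidt rank at most $k$ is $[-1,1]$ as soon as $k\ge 2$. The value $1$ is attained by a product $x\otimes x$. The value $-1$ is attained by the antisymmetric vector $(e_1\otimes e_2-e_2\otimes e_1)/\sqrt2$, which has Schmidt rank $2$. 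Again the expression is affine in $\<\psi|\mathbb{F}|\psi\>$, so only the endpoints matter. The value $+1$ gives $\mu\le d/(d-1)$. The value $-1$ gives $\mu-1+\mu/d\ge 0$, i.e. $\mu\ge d/(d+1)$. The conditions are therefore independent of $k$ for $2\le k\le d$, which explains why a single interval appears in the statement.

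The one step needing care is showing that the range of $\<\psi|\mathbb{F}|\psi\>$ is all of $[-1,1]$ and no larger. The upper and lower bounds follow because $\mathbb{F}$ has eigenvalues $\pm1$, and the endpoint values are attained by the two explicit vectors above. For $k=1$ the range shrinks to $[0,1]$, since $\<x\otimes y|\mathbb{F}|x\otimes y\>=|\<x|y\>|^2$. This recovers the positivity result and is consistent with the first part.
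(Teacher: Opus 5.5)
Your proof is correct. The paper gives no proof of this statement; it is quoted from Tomiyama. It is, however, the $\nu=0$ case of the paper's proposition on $\Lambda_{\mu,\nu}$, and that proof is organized differently:
\begin{itemize}
\item positivity is simply called straightforward;
\item necessity of the $2$-positivity bounds comes from applying $\mathcal{I}_2\otimes\Lambda_{\mu,\nu}$ to $|w\rangle\langle w|$ with $w=r_1\otimes e_1+r_2\otimes e_d$, and reading off the eigenvalues $1-\frac{d-1}{d}\mu$, $1-\frac{d-1}{d}\mu-\nu$ and $\frac{d+1}{d}\mu+\nu-1$;
\item sufficiency comes from positivity of the Choi matrix, so $2$-positivity already forces complete positivity.
\end{itemize}

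At $\nu=0$ the decisive eigenvalue $\frac{d+1}{d}\mu-1$ is exactly your value $\langle\psi|C_\mu|\psi\rangle=\frac{\mu}{d}-(1-\mu)$ on the antisymmetric vector. So the two necessity tests are the image-side and Choi-side forms of the same Schmidt-rank-$2$ witness.

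What your route buys is economy and explanation. Writing $C_\mu=(1-\mu)\mathbb{F}+\frac{\mu}{d}\I\otimes\I$ makes the quadratic form affine in the single scalar $\langle\psi|\mathbb{F}|\psi\rangle$. Both directions, and every $k$ including $k=1$, then drop out of one endpoint check. It also shows plainly why all $k\ge 2$ give the same interval: both eigenvalues $\pm1$ of $\mathbb{F}$ have eigenvectors of Schmidt rank at most $2$, so $2$-block positivity of $C_\mu$ is already equivalent to $C_\mu\ge 0$.

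The paper's explicit-vector computation is what carries over to $\nu\neq 0$. There the extra Choi term $\nu\sum_i e_{ii}\otimes e_{ii}$ breaks your one-variable affine reduction.

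One minor point: you only use that the attainable values of $\langle\psi|\mathbb{F}|\psi\rangle$ lie in $[-1,1]$ and include both endpoints. The claim that the whole interval is attained is true (e.g.\ by continuity) but not needed.
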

In what follows we call  $\mathcal{T}_k := \Phi_{\alpha_k}$ Tomiyama maps.
In this paper, we generalize Tomiyama results for the following 2-parameter class of maps

\begin{equation}
    \Phi_{\alpha,\beta} = (1-\alpha-\beta)\, \mathcal{I} + \alpha \,\tau_0 + \beta\, \Delta ,~~~\alpha,\beta \in \mathbb{R}
    \label{defn_phi}
\end{equation}
and

\begin{equation}
    \Lambda_{\mu,\nu} = (1-\mu-\nu)\, {T} + \mu \,\tau_0 + \nu\, \Delta, ~~~\mu,\nu \in \mathbb{R}
\end{equation}
where $\Delta(X)$ denotes a projection to the diagonal of $X \in \Md$. Note, that both families $\Phi_{\alpha,\beta}$ and $\Lambda_{\mu,\nu}$ are self-adjoint w.r.t. Hilbert-Schmidt inner product in $M_d$. Moreover, they are both unital and trace preserving. Hence, both families provide a diagonal perturbation of the maps considered in \cite{Tomiyama-II}.

The paper is organized as follows. In Section \ref{SI}, we  derive complete characterizations of positivity and complete positivity of  $\Phi_{\alpha,\beta}$. Moreover,  we analyze $k$-positivity and the associated geometric conjecture. 
In Section \ref{SII}, we analyze maps $\Lambda_{\mu,\nu}$ and derive the necessary and sufficient conditions for $k$-positivity. The paper concludes with a summary of the results and an outlook on possible extensions in Section \ref{SIV}. 

\section{A class of linear maps $\Phi_{\alpha,\beta}$}   \label{SI}

\subsection{Positivity and complete positivity}

In this section, we derive the conditions for positivity and complete positivity of the map $\Phi_{\alpha,\beta}$ . 

\begin{proposition} \label{PCP} A linear map $\Phi_{\alpha,\beta}$ defined in Eq.~\eqref{defn_phi} is 

\begin{enumerate}
    \item positive  iff

\begin{equation}   \label{1P}
    0 \leq \alpha \leq \frac{d}{d-1} \ , \quad - \frac{2\alpha}{d} \leq \beta \leq \frac{d}{d-1} - \alpha , 
\end{equation}

\item completely positive iff

\begin{equation}  \label{CP}
    0 \leq \alpha \leq \frac{d}{d-1} \ , \quad - \frac{\alpha}{d} \leq \beta \leq \frac{d}{d-1} - \frac{d+1}{d}\, \alpha .
\end{equation}
    
\end{enumerate} 
    
\end{proposition}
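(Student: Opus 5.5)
The plan is to treat the two parts separately: complete positivity via Choi's theorem, and positivity via the scalar quadratic form $\langle y|\Phi_{\alpha,\beta}(|x\rangle\langle x|)|y\rangle$ over unit vectors $x,y\in\mathbb{C}^d$.

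\emph{Complete positivity.} We compute the Choi matrix $C=\sum_{i,j}E_{ij}\otimes\Phi_{\alpha,\beta}(E_{ij})$. The map acts on matrix units by
\[
\Phi_{\alpha,\beta}(E_{ii})=(1-\alpha)E_{ii}+\tfrac{\alpha}{d}\I ,\qquad \Phi_{\alpha,\beta}(E_{ij})=(1-\alpha-\beta)E_{ij}\ \ (i\neq j).
\]
Hence $C$ splits as a direct sum of two blocks.
\begin{itemize}
\item On the span of $|ij\rangle$ with $i\neq j$, $C$ is diagonal with entries $\alpha/d$.
\item On the span of $|ii\rangle$, $C$ is the $d\times d$ matrix $(\beta+\tfrac{\alpha}{d})\I+(1-\alpha-\beta)J$, where $J$ is the all-ones matrix.
\end{itemize}
The second block has eigenvalues $\beta+\alpha/d$ (multiplicity $d-1$) and $d-d\alpha+\alpha/d-(d-1)\beta$. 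Requiring all eigenvalues to be nonnegative gives exactly $\alpha\ge0$, $\beta\ge-\alpha/d$ and $\beta\le \frac{d}{d-1}-\frac{d+1}{d}\alpha$. The bound $\alpha\le d/(d-1)$ is then implied, since the lower and upper bounds on $\beta$ must be compatible. This settles \eqref{CP}.

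\emph{Positivity: necessity.} Write $p_i=|x_i|^2$ and $q_i=|y_i|^2$. Then
\[
f(x,y):=\langle y|\Phi_{\alpha,\beta}(|x\rangle\langle x|)|y\rangle=(1-\alpha-\beta)|\langle y|x\rangle|^2+\tfrac{\alpha}{d}+\beta\textstyle\sum_i p_iq_i .
\]
Specific test vectors give each of the four bounds in \eqref{1P}.
\begin{itemize}
\item Taking $x\perp y$ with disjoint supports gives $\alpha\ge0$.
\item Taking $x=y=e_1$ gives $1-\alpha+\alpha/d\ge0$, i.e.\ $\alpha\le d/(d-1)$.
\item Taking $x=y=(1,\dots,1)/\sqrt d$ gives $\beta\le \frac{d}{d-1}-\alpha$.
\item Taking $x=(e_1+e_2)/\sqrt2$ and $y=(e_1-e_2)/\sqrt2$ (orthogonal, with $\sum p_iq_i=1/2$) gives $\beta\ge-2\alpha/d$.
\end{itemize}

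\emph{Positivity: sufficiency.} Since $f$ is affine in $(\alpha,\beta)$, the set of parameters giving positive maps is convex. It therefore suffices to check positivity at the four vertices of the quadrilateral \eqref{1P}: $(0,0)$, $(0,\tfrac{d}{d-1})$, $(\tfrac{d}{d-1},0)$ and $(\tfrac{d}{d-1},-\tfrac{2}{d-1})$.
\begin{itemize}
\item The vertex $(0,0)$ is the identity map.
\item The vertex $(\tfrac{d}{d-1},0)$ is the Tomiyama map $\mathcal{T}_1$, which is positive by the first theorem of \cite{Tomiyama-II} quoted above.
\item At $(0,\tfrac{d}{d-1})$ we need $d\sum_i|\bar x_iy_i|^2\ge|\sum_i\bar x_iy_i|^2$, which is Cauchy--Schwarz.
\item At the last vertex $f=\frac{1}{d-1}\bigl(|\langle y|x\rangle|^2+1-2\sum_i p_iq_i\bigr)$.
\end{itemize}

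\emph{The key inequality.} The main obstacle is the last vertex. Set $z_i=\bar x_iy_i$, so that $\sum_i|z_i|\le1$ by Cauchy--Schwarz and $\sum_i z_i=\langle x|y\rangle$. Then
\[
\Bigl|\sum_i z_i\Bigr|^2\ \ge\ \sum_i|z_i|^2-\sum_{i\neq j}|z_i||z_j|\ =\ 2\sum_i|z_i|^2-\Bigl(\sum_i|z_i|\Bigr)^2\ \ge\ 2\sum_ip_iq_i-1 .
\]
This is exactly $f\ge0$ at the last vertex. Specialized to orthogonal $x,y$, it also confirms that $1/2$ is the maximal value of $\sum_ip_iq_i$, so the bound $\beta\ge-2\alpha/d$ is sharp. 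This completes the plan for \eqref{1P}.
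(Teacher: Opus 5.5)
Your proof is correct. The complete-positivity part and the necessity half of the positivity part match the paper's. You use the same Choi-matrix spectrum, and your test vectors are equivalent to the paper's: the diagonal Choi entries, the all-ones matrix, and $v=e_1-e_d$.

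The genuine difference is in sufficiency. The paper shows that on the two boundary lines $\beta=-2\alpha/d$ and $\beta=\frac{d}{d-1}-\alpha$ the Choi matrix splits as $A+(\mathcal{I}\otimes T)B_\pm$ with $A,B_\pm\geq 0$. That is, it writes each boundary map as CP plus co-CP, which gives positivity and also the corollary that every positive $\Phi_{\alpha,\beta}$ is decomposable.

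You instead use convexity of the positive region to reduce to the four vertices, and check each vertex with a scalar inequality. The only nontrivial one is $|\langle x|y\rangle|^2\geq 2\sum_i|x_i|^2|y_i|^2-1$, and your argument with $z_i=\bar x_i y_i$ settles it cleanly.

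Your route is more elementary and does not depend on producing an explicit decomposition. This matters, because the paper's decomposition as printed needs repair. The vectors should be $e_i\otimes e_j\pm e_j\otimes e_i$. On the upper line the positive part must be $\beta\sum_i e_{ii}\otimes e_{ii}+\bigl(\frac{\alpha}{d}-\frac{1}{d-1}\bigr)\sum_{i,j}e_{ij}\otimes e_{ij}$, not the same $A$ as on the lower line.

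What your route does not give for free is decomposability. For that you would add one observation: $T\circ\mathcal{T}_1=\widetilde{\Psi}_0$ and $T\circ\mathcal{P}=\widetilde{\Psi}_1$ are CP, and $\Psi_0,\Psi_1$ are CP. So all four vertices, and hence the whole quadrilateral, are decomposable.
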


\begin{proof}
We first show the conditions for complete positivity. The corresponding Choi matrix \cite{Choi1975} of $\Phi_{\alpha,\beta}$ reads

\begin{equation}
    C_{\alpha,\beta} = \sum_{i,j} e_{ij} \otimes \Phi_{\alpha,\beta}(e_{ij}) ,
\end{equation}
where $e_{ij}$ are matrix units in $\Md$. One finds

\begin{equation}   \label{Choi}
    C_{\alpha,\beta} = \gamma \sum_{i,j} e_{ij} \otimes e_{ij} + \frac{\alpha}{d} \I_d\otimes \I_d + \beta \sum_i e_{ii} \otimes e_{ii} ,
\end{equation}
where $\gamma = 1-\alpha - \beta$. Now, $\Phi_{\alpha,\beta}$ is positive iff $C_{\alpha,\beta}$ is block-positive, and it is completely positive iff the Choi matrix is positive definite. Any block-positive matrix has non-negative diagonal elements and hence

$$ \gamma + \frac{\alpha}{d} + \beta = 1 - \frac{d-1}{d}\alpha \geq 0 \ , \quad \frac{\alpha}{d} \geq 0 ,   $$
which is equivalent to

\begin{equation}
     0 \leq \alpha \leq \frac{d}{d-1} .
\end{equation}
Taking into account the following eigenvalues of the Choi matrix (\ref{Choi})

\[ \frac{\alpha}{d}+\beta,\qquad d-\frac{d^2-1}{d}\alpha-(d-1)\beta,
\]
one immediately proves (\ref{CP}).  

Now, to derive positivity condition (\ref{1P}), let us consider the action of the map $\Phi_{\alpha,\beta}$ on $X \in \Md$ such that $X_{ij}=1$ for all $i,j=1,\ldots,d$. One finds

\begin{equation}
    \Phi_{\alpha,\beta}(X) = \gamma X + (\alpha + \beta)\, \I_d, 
\end{equation}
with eigenvalues $\{\alpha+\beta, d - (d-1)(\alpha+\beta)\}$ and hence positivity of the map implies 

$$ 0 \leq \alpha + \beta \leq \frac{d}{d-1} ,  $$
which in turn implies the upper bound on $\beta $ in (\ref{1P}). On the other hand taking $X = vv^\dagger$, with $v = e_1 - e_d$ one obtains

 \[\Phi_{\alpha,\beta}(X)=\begin{pmatrix}
    1-\tfrac{(d-2)\alpha}{d}&0&0\cdots0&0&-1+\alpha+\beta\\
    0&\tfrac{2\alpha}{d}&0\cdots0&0&0\\
    \vdots & \vdots& \ddots&\vdots&\vdots\\
    0&0&0\cdots0&\tfrac{2\alpha}{d}&0\\
    -1+\alpha+\beta&0&0\cdots0&0&1-\tfrac{(d-2)\alpha}{d}\\
\end{pmatrix} ,\]
and hence positivity of $\Phi_{\alpha,\beta}(X)$ implies $2\alpha+d\beta \geq 0$ which in turn implies the lower bound $-2\alpha/d \leq \beta$ in (\ref{1P}). This way we proved that (\ref{1P}) provides a necessary condition. To show that it is also sufficient we prove that if these bounds (lower or upper) are saturated then the map is positive. Indeed, suppose that $\beta = -2\alpha/d$. Then the Choi matrix (\ref{Choi}) may be represented as follows

\[ C = A + (\mathcal{I} \otimes T)B_{+} ,\]
where
\[ A = \left(1- \frac{d-1}{d}\alpha\right) \sum_{i,j} e_{ij} \otimes e_{ij} \ , \quad B_+ = \frac{\alpha}{d} \,\sum_{i<j} f_{ij}\, f_{ij}^\dagger\ , \]
and $f_{ij} = (e_i + e_j) \otimes (e_i + e_j)$. Now, since $A$ and $B_{+}$ are evidently positive definite $C$ is block-positive.  Similarly, if $\beta = \frac{d}{d-1} - \alpha$, then 

\[ C = A + (\mathcal{I} \otimes T)B_- ,\]
where 
\[  B_- = \frac{\alpha}{d} \,\sum_{i<j} g_{ij} \,g_{ij}^\dagger\ , \]
and $g_{ij} = (e_i - e_j) \otimes (e_i - e_j)$. Again, block positivity of $C$ immediately follows which completes the proof. 
\end{proof}

\begin{corollary} All positive maps $\Phi_{\alpha,\beta}$ are decomposable. 
\end{corollary}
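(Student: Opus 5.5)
The plan is to use that the positive region in Proposition \ref{PCP} is convex, that decomposable maps form a convex cone, and that the parametrization $(\alpha,\beta)\mapsto\Phi_{\alpha,\beta}$ is affine. With these three facts it is enough to show that the maps on the boundary of the positivity region are decomposable. The proof of Proposition \ref{PCP} essentially already does this.

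First, I would recall what the proof of Proposition \ref{PCP} established for the two saturating lines. On the line $\beta=-2\alpha/d$ the Choi matrix was written as $C=A+(\mathcal{I}\otimes T)B_+$, with $A\geq 0$ and $B_+\geq 0$. On the line $\beta=\frac{d}{d-1}-\alpha$ it was written as $C=A+(\mathcal{I}\otimes T)B_-$, with $B_-\geq 0$. By Choi's correspondence, $A$ is the Choi matrix of a completely positive map. Likewise, $(\mathcal{I}\otimes T)B_\pm$ is the Choi matrix of a completely positive map composed with the transposition $T$. So every map $\Phi_{\alpha,\beta}$ with $(\alpha,\beta)$ on either line is decomposable, for every $0\leq\alpha\leq\frac{d}{d-1}$.

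Next, I would fix $\alpha\in[0,\frac{d}{d-1}]$ and take any $\beta$ with $-\frac{2\alpha}{d}\leq\beta\leq\frac{d}{d-1}-\alpha$. This interval is nonempty because $\frac{d}{d-1}-\alpha+\frac{2\alpha}{d}\geq 0$ on this range of $\alpha$. Write $\beta=t\beta_-+(1-t)\beta_+$ with $t\in[0,1]$, where $\beta_-=-2\alpha/d$ and $\beta_+=\frac{d}{d-1}-\alpha$. Since $\Phi_{\alpha,\beta}$ is affine in $\beta$, we get $\Phi_{\alpha,\beta}=t\,\Phi_{\alpha,\beta_-}+(1-t)\,\Phi_{\alpha,\beta_+}$. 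Explicitly, the Choi matrix is
\[ tA_-+(1-t)A_+ + (\mathcal{I}\otimes T)\big(tB_++(1-t)B_-\big) , \]
where $A_\pm$ are the CP parts at $\beta_\pm$. Here $A_-$ and $A_+$ both equal $\big(1-\frac{d-1}{d}\alpha\big)\sum_{i,j}e_{ij}\otimes e_{ij}$. This is a sum of a positive matrix and a partial transpose of a positive matrix, so $\Phi_{\alpha,\beta}$ is decomposable.

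Since, by Proposition \ref{PCP}, every positive $\Phi_{\alpha,\beta}$ lies on such a vertical segment, this proves the corollary. The only point that needs care is checking that the decompositions in the proof of Proposition \ref{PCP} really hold for the full range of $\alpha$, and not only at particular values. In particular, the prefactor $1-\frac{d-1}{d}\alpha$ of $A$ must be nonnegative, which it is exactly when $\alpha\leq\frac{d}{d-1}$. Likewise $\alpha/d\geq 0$ is needed for $B_\pm\geq0$, and this holds because $\alpha\geq 0$.
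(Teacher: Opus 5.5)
Your route is the one the paper intends: the corollary is read off from the saturated-bound decompositions in the proof of Proposition \ref{PCP}, plus convexity. However, those decompositions are not correct as printed, and you rely explicitly on the faulty part.

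\textbf{Where it fails.} Your assertion that $A_-$ and $A_+$ both equal $A=\big(1-\frac{d-1}{d}\alpha\big)\sum_{i,j}e_{ij}\otimes e_{ij}$ is false on the upper line $\beta=\frac{d}{d-1}-\alpha$. There $\gamma=-\frac{1}{d-1}$. Since $\mathcal{I}\otimes T$ is an involution, $C-A=(\mathcal{I}\otimes T)B$ forces $B=(\mathcal{I}\otimes T)(C-A)$. This matrix has eigenvalue $\alpha-\frac{d}{d-1}$ on $e_i\otimes e_j+e_j\otimes e_i$, so no $B\geq 0$ exists unless $\alpha=\frac{d}{d-1}$.

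The quickest sanity check is $\alpha=0$. The upper-line map is then $\Psi_1$, the co-CP part carries the factor $\alpha/d=0$, and the identity would read $C_{\Psi_1}=\sum_{i,j}e_{ij}\otimes e_{ij}$, i.e.\ $\Psi_1=\mathrm{id}$.

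There is a second problem with the vectors. As printed, $f_{ij}=(e_i+e_j)\otimes(e_i+e_j)$ and $g_{ij}=(e_i-e_j)\otimes(e_i-e_j)$ are real product vectors, so $(\mathcal{I}\otimes T)(f_{ij}f_{ij}^\dagger)=f_{ij}f_{ij}^\dagger$. Read literally, the lower-line identity would then give $C\geq 0$, i.e.\ $\Phi_{\alpha,-2\alpha/d}$ completely positive, contradicting part 2 of Proposition \ref{PCP} for $\alpha>0$.

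Your closing paragraph checks only the signs of the prefactors. What actually needed checking was the identities themselves; the paper's proof has the same slip.

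\textbf{The repair.} It is short, and afterwards your convexity argument along vertical segments goes through verbatim, since a convex combination of the two CP parts is still positive semidefinite. Take $h_{ij}=e_i\otimes e_j+e_j\otimes e_i$ and $g_{ij}=e_i\otimes e_j-e_j\otimes e_i$. On the lower line
\[
C=\Big(1-\tfrac{d-1}{d}\alpha\Big)\sum_{i,j}e_{ij}\otimes e_{ij}+\tfrac{\alpha}{d}\,(\mathcal{I}\otimes T)\sum_{i<j}h_{ij}h_{ij}^\dagger ,
\]
and on the upper line
\[
C=\Big(1-\tfrac{d-1}{d}\alpha\Big)C_{\Psi_1}+\tfrac{\alpha}{d}\,(\mathcal{I}\otimes T)\sum_{i<j}g_{ij}g_{ij}^\dagger ,
\]
where $C_{\Psi_1}=\frac{1}{d-1}\big(d\sum_i e_{ii}\otimes e_{ii}-\sum_{i,j}e_{ij}\otimes e_{ij}\big)$ is the Choi matrix of $\Psi_1$. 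It is positive semidefinite: on ${\rm span}\{e_i\otimes e_i\}$ it equals $\frac{1}{d-1}(d\,\I_d-J)$ with $J$ the all-ones matrix, and it vanishes elsewhere.

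Alternatively, bypass the lines altogether. By Corollary \ref{cor1} the positive set is ${\rm conv}\{\Psi_0,\Psi_1,\mathcal{T}_1,\mathcal{P}\}$. Here $\Psi_0$ and $\Psi_1$ are CP. The maps $\mathcal{T}_1$ and $\mathcal{P}$ are co-CP, because the partial transposes of their Choi matrices are $\frac{1}{d-1}\sum_{i<j}g_{ij}g_{ij}^\dagger$ and $\frac{1}{d-1}\sum_{i<j}h_{ij}h_{ij}^\dagger$. Since the decomposable maps form a convex cone and $(\alpha,\beta)\mapsto\Phi_{\alpha,\beta}$ is affine, decomposability of the four vertices gives the corollary.
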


 Let us define completely positive maps corresponding to the particular points $(\alpha,\beta)$:

\[   \Psi_0   \leftrightarrow (0,0) \ , \Psi_1 \leftrightarrow \left(0,\frac{d}{d-1}\right) \ , \ \Psi_2 \leftrightarrow \left(\frac{d}{d-1},-\frac{1}{d-1}\right) .
\]
One finds

\begin{eqnarray*}
    \Psi_0(X) &=& X \  , \\
    \Psi_1(X) &=& \frac{1}{d-1} \Big( d \Delta(X) - X) , \\
     \Psi_2(X) &=& \frac{1}{d-1} \Big( \I_d\Tr X  - \Delta(X) \Big) ,
\end{eqnarray*}
Similarly define positive maps

\[  \mathcal{T}_1 \leftrightarrow \left(\frac{d}{d-1},0\right) \ , \ \mathcal{P} \leftrightarrow \left(\frac{d}{d-1},-\frac{2}{d-1}\right) \ , 
\]
 that is,
\begin{eqnarray*}
    \mathcal{T}_1(X) &=& \frac{1}{d-1} ( \I_d\Tr X - X) , \\
     \mathcal{P}(X) &=& \frac{1}{d-1} \Big( \I_d\Tr X  + X - 2\Delta(X) \Big) .
\end{eqnarray*}
Note, that the map $\mathcal{T}_1$ corresponding to a Tomiyama points $(\alpha_1,0)$ is nothing but the celebrated reduction map \cite{HHHH} which plays a key role in entanglement theory.

\begin{corollary} \label{cor1} A set of completely positive maps $\Phi_{\alpha,\beta}$ forms a triangle (cf. Fig. \ref{FIG1})

$$  \mathcal{P}_{\rm CP} = {\rm conv} \{\Psi_0,\Psi_1,\Psi_2\} , $$
whereas a set of positive maps forms a quadrilateral containing CP triangle (cf. Fig. \ref{FIG2})

$$ \mathcal{P}_1 =  {\rm conv} \{\Psi_0,\Psi_1,\mathcal{T}_1,\mathcal{P}\} . $$
Here `conv' stands for the convex hull. Note that $\Psi_2 \in {\rm conv} \{\mathcal{T}_1,\mathcal{P}\}$. Indeed, $\Psi_2 = \frac 12 (\mathcal{T}_1 + \mathcal{P})$.

\end{corollary}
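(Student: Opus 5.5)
The plan is to read both regions directly off Proposition~\ref{PCP}. The key fact is that the map $(\alpha,\beta)\mapsto\Phi_{\alpha,\beta}$ is affine, and injective because $\mathcal{I},\tau_0,\Delta$ are linearly independent for $d\ge 2$. Hence convex combinations of maps correspond exactly to convex combinations of parameter points. It therefore suffices to show that the regions (\ref{CP}) and (\ref{1P}) in the $(\alpha,\beta)$-plane are the polygons with the stated vertices.

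\emph{CP triangle.} Condition (\ref{CP}) is the intersection of three half-planes: $\alpha\ge0$, $\beta\ge-\alpha/d$, and $\beta\le \frac{d}{d-1}-\frac{d+1}{d}\alpha$. The constraint $\alpha\le\frac{d}{d-1}$ is implied by the last two, since $-\alpha/d\le \frac{d}{d-1}-\frac{d+1}{d}\alpha$ is equivalent to $\alpha\le \frac{d}{d-1}$. The pairwise intersections of the three boundary lines are the vertices:
\begin{itemize}
\item $\alpha=0$ and $\beta=-\alpha/d$ meet at $(0,0)$, giving $\Psi_0$;
\item $\alpha=0$ and the upper line meet at $(0,\frac{d}{d-1})$, giving $\Psi_1$;
\item $\beta=-\alpha/d$ and the upper line meet at $\alpha=\frac{d}{d-1}$, $\beta=-\frac1{d-1}$, giving $\Psi_2$.
\end{itemize}
A bounded intersection of three half-planes is the convex hull of these three points, so $\mathcal{P}_{\rm CP}={\rm conv}\{\Psi_0,\Psi_1,\Psi_2\}$.

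\emph{Positive quadrilateral.} Condition (\ref{1P}) is the intersection of four half-planes: $\alpha\ge0$, $\alpha\le\frac{d}{d-1}$, $\beta\ge -2\alpha/d$, and $\alpha+\beta\le\frac{d}{d-1}$. The vertices come from intersecting adjacent boundary lines:
\begin{itemize}
\item at $\alpha=0$ we get $(0,0)$ and $(0,\frac{d}{d-1})$;
\item at $\alpha=\frac d{d-1}$ the upper line gives $\beta=0$, which is $\mathcal{T}_1$;
\item at $\alpha=\frac d{d-1}$ the lower line gives $\beta=-\frac{2}{d-1}$, which is $\mathcal{P}$.
\end{itemize}
The lower and upper lines meet only at $\alpha=\frac{d}{d-1}\cdot\frac{d}{d-2}>\frac{d}{d-1}$, or never when $d=2$. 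So the region is a genuine quadrilateral equal to ${\rm conv}\{\Psi_0,\Psi_1,\mathcal{T}_1,\mathcal{P}\}$. It contains the CP triangle because $\mathcal{P}_{\rm CP}\subset\mathcal{P}_1$ follows from comparing the inequalities, since complete positivity implies positivity.

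\emph{Identity $\Psi_2=\frac12(\mathcal{T}_1+\mathcal{P})$.} This is immediate in parameters, since $\frac12\bigl((\tfrac d{d-1},0)+(\tfrac d{d-1},-\tfrac2{d-1})\bigr)=(\tfrac d{d-1},-\tfrac1{d-1})$. One can also check it from the explicit formulas: $\frac12\bigl(2\I_d\Tr X-2\Delta(X)\bigr)/(d-1)=\Psi_2(X)$.

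Finally, one should verify the explicit formulas for $\Psi_1,\Psi_2,\mathcal{T}_1,\mathcal{P}$ by substituting into (\ref{defn_phi}). This is routine, using $\tau_0(X)=\I_d\Tr X/d$.

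The only point needing care is the vertex bookkeeping: confirming that no inequality is redundant in the quadrilateral, and checking the degenerate case $d=2$. There the lower line becomes $\beta=-\alpha$, which is parallel to the upper line $\alpha+\beta=2$, so the region is a parallelogram, and the argument above still applies.
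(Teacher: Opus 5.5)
Your proposal is correct and follows the paper's route. The paper gives no separate argument for this corollary: it reads the inequalities (\ref{CP}) and (\ref{1P}) of Proposition~\ref{PCP} as a triangle and a quadrilateral in the $(\alpha,\beta)$-plane with the listed vertices. Your remarks that $(\alpha,\beta)\mapsto\Phi_{\alpha,\beta}$ is affine and injective, and that the two slanted boundary lines do not meet for $0\le\alpha\le\frac{d}{d-1}$, make explicit what the paper leaves implicit.
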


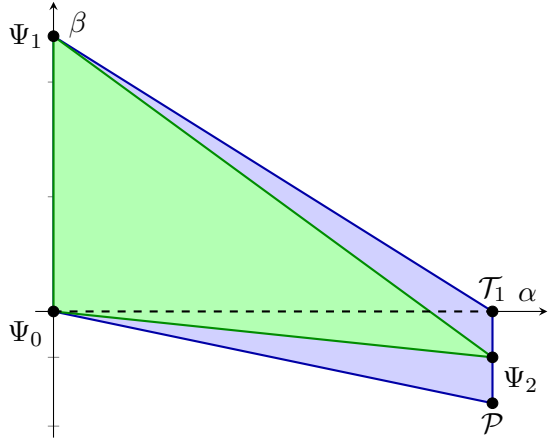
\begin{figure}[htbp]
\centering
\begin{tikzpicture}

\begin{axis}[
    axis lines=middle,
    xmin=-0.05, xmax=1.35,
    ymin=-0.55, ymax=1.35,
    xlabel={$\,\alpha$},
    ylabel={$\,\beta$},
    xtick={0,0.5,1},
    ytick={-0.5,-0.2,0,0.5,1},
    yticklabels={}, 
    width=0.52\textwidth,
    height=0.46\textwidth,
    legend style={at={(0.02,0.98)},anchor=north west,draw=none,fill=none},
    clip=false,
]

\def\a{1.2}  
\def\b{0.2}  

\addplot[
  fill=blue!18,
  draw=blue!65!black,
  thick
] coordinates {
  (0,0)
  (0,\a)
  (\a,0)
  (\a,-2*\b)
  (0,0)
};

\addplot[
  fill=green!30,
  draw=green!55!black,
  thick
] coordinates {
  (0,0)
  (0,\a)
  (\a,-\b)
  (0,0)
};

\addplot[only marks,mark=*,mark size=2pt] coordinates {
  (0,0)
  (0,\a)
  (\a,-\b)
  (\a,0)
  (\a,-2*\b)
};

\node[below left]  at (0,0) {$\Psi_0$};
\node[left]        at (0,\a) {$\Psi_1$};
\node[below right] at (\a,-\b) {$\Psi_2$};
\node[above]       at (\a,0) {$\mathcal{T}_1$};
\node[below]       at (\a,-2*\b) {$\mathcal P$};

\addplot[
  black,
  thick,
  dashed
]
coordinates {
  (0,0)
  (\a,0)
};

\end{axis}
\end{tikzpicture}
\caption{Illustration of Corollary~2.3 in the $(\alpha,\beta)$-plane.
The completely positive region $\mathcal P_{\mathrm{CP}}$ is the green triangle
$\mathrm{conv}\{\Psi_0,\Psi_1,\Psi_2\}$, and the positive region $\mathcal P_1$
is the quadrilateral $\mathrm{conv}\{\Psi_0,\Psi_1,\mathcal{T}_1,\mathcal P\}$.
Moreover, $\Psi_2=\tfrac12(\mathcal{T}_1+\mathcal P)$ lies on the vertical segment between $\mathcal{T}_1$ and $\mathcal P$.   \label{FIG1}}
\end{figure}

\subsection{Analysis of $k$-positivity}

Let $\mathcal{T}_k$ be a $k$-positive Tomiyama map corresponding to $(\alpha_k,0)$.

\begin{con} \label{con} A set of $k$-positive maps $\Phi_{\alpha,\beta}$ forms a quadrilateral 

$$ \mathcal{P}_k =  {\rm conv} \{\Psi_0,\Psi_1,\Psi_2,\mathcal{T}_k\} . $$
    
\end{con}
Since CP maps $\{\Psi_0,\Psi_1,\Psi_2\}$ are $k$-positive the above set contains $k$-positive maps only. Our goal is to show that if $\Phi_{\alpha,\beta}$ is $k$-positive it must belong to $\mathcal{P}_k$. For $k=1$ it reduces to Corollary \ref{cor1}. 

Let

\[ \mathcal{P}^+_k =  {\rm conv} \{\Psi_0,\Psi_1,\mathcal{T}_k\} ,  \quad \mathcal{P}^-_k =  {\rm conv} \{\Psi_0,\Psi_2,\mathcal{T}_k\} ,   \]
that is, $\mathcal{P}^+_k$ contains $k$-positive maps with $\beta \geq 0$ and  $\mathcal{P}^-_k$ contains $k$-positive maps with $\beta \leq 0$. Clearly one has $ \mathcal{P}_k =  \mathcal{P}^+_k \cup  \mathcal{P}^-_k$.

\begin{proposition} Let $\Phi_{\alpha,\beta}$ be a $k$-positive map with $\beta \leq 0$. Then $\Phi_{\alpha,\beta} \in \mathcal{P}^-_k$.
\end{proposition}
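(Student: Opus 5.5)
The plan is to show that $k$-positivity forces two linear inequalities on $(\alpha,\beta)$, and that together with $\beta\le 0$ they cut out exactly the triangle $\mathcal{P}^-_k={\rm conv}\{\Psi_0,\Psi_2,\mathcal{T}_k\}$. The vertices are $(0,0)$, $\bigl(\frac{d}{d-1},-\frac{1}{d-1}\bigr)$ and $(\alpha_k,0)$ with $\alpha_k=\frac{kd}{kd-1}$, so the triangle has three edges:
\begin{itemize}
\item the edge $\Psi_0\mathcal{T}_k$ lies on the line $\beta=0$;
\item the edge $\Psi_0\Psi_2$ lies on the line $\beta=-\alpha/d$;
\item the edge $\Psi_2\mathcal{T}_k$ lies on a third line through these two points.
\end{itemize}
It therefore suffices to derive the two inequalities corresponding to the last two edges as necessary conditions for $k$-positivity. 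I assume $k\ge 2$ throughout, since the case $k=1$ is governed by Proposition~\ref{PCP}, where the vertex $\mathcal{P}$ appears instead of $\Psi_2$.

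The tool is the restricted Choi matrix. $k$-positivity of $\Phi_{\alpha,\beta}$ implies that
$$C^{(k)}=\sum_{i,j\le k} e_{ij}\otimes\Phi_{\alpha,\beta}(e_{ij})=\gamma\sum_{i,j\le k}e_{ij}\otimes e_{ij}+\frac{\alpha}{d}\,\I_k\otimes\I_d+\beta\sum_{i\le k}e_{ii}\otimes e_{ii}\ \geq 0,$$
where $\gamma=1-\alpha-\beta$. This holds because $C^{(k)}$ is $({\rm id}_k\otimes\Phi_{\alpha,\beta})$ applied to an unnormalized rank-$k$ maximally entangled projector.

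First, since $k\ge 2$, I restrict $C^{(k)}$ to the two-dimensional subspace spanned by $e_1\otimes e_1$ and $e_2\otimes e_2$. On this subspace it is the $2\times 2$ matrix with diagonal entries $\gamma+\frac{\alpha}{d}+\beta$ and off-diagonal entries $\gamma$. Its eigenvalues are $\frac{\alpha}{d}+\beta$ and $2\gamma+\frac{\alpha}{d}+\beta$. Requiring $\frac{\alpha}{d}+\beta\ge 0$ gives $\beta\ge-\alpha/d$, which is the edge $\Psi_0\Psi_2$.

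Second, I evaluate $C^{(k)}$ on $\psi=\sum_{i\le k}e_i\otimes e_i$. This gives $\langle\psi,C^{(k)}\psi\rangle=k\bigl(\gamma k+\frac{\alpha}{d}+\beta\bigr)\ge 0$, i.e.
$$k-\Bigl(k-\frac1d\Bigr)\alpha-(k-1)\beta\ \ge 0 .$$
A direct substitution shows that this line passes through $(\alpha_k,0)$ and through $\bigl(\frac{d}{d-1},-\frac{1}{d-1}\bigr)$, so it is exactly the edge $\Psi_2\mathcal{T}_k$. The origin satisfies the inequality strictly, which confirms it selects the correct side.

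Finally, the three half-planes $\beta\le 0$, $\beta\ge-\alpha/d$ and the inequality above intersect in precisely the triangle with vertices $\Psi_0$, $\Psi_2$, $\mathcal{T}_k$. Hence $\Phi_{\alpha,\beta}\in\mathcal{P}^-_k$. The only real obstacle is choosing the right witness vectors; once these are chosen, the verification is routine algebra. Sufficiency is not needed here, since the three vertices are already known to be $k$-positive.
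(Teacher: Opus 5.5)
Your proof is correct and is essentially the paper's argument. The paper applies $\mathcal{I}_k\otimes\Phi_{\alpha,\beta}$ to the same vector $v=\sum_{i\le k}e_i\otimes e_i$ and reads off the eigenvalues $\frac{\alpha}{d}+\beta$ and $kd-(kd-1)\alpha-d(k-1)\beta$; these are exactly the two quantities you obtain from the $2\times 2$ compression and the expectation value in $\psi$. Your explicit restriction to $k\ge 2$ is well placed: the eigenvalue $\frac{\alpha}{d}+\beta$ has multiplicity $k-1$ and disappears for $k=1$, and in that case the positive map $\mathcal{P}$ indeed lies outside $\mathcal{P}^-_1$, so the statement itself requires $k\ge 2$.
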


\begin{proof}
    Consider a vector  $v \in \mathbb{C}^k \otimes \mathbb{C}^d$ defined by

\[  v = e_1 \otimes e_1 + \ldots + e_k \otimes e_k . \]
If $ \Phi_{\alpha,\beta}$ is $k$-positive, then 

\[  (\mathcal{I}_k \otimes \Phi_{\alpha,\beta})(vv^\dagger) \geq 0 , \]
where $\mathcal{I}_k$ is an identity map in $\mathbb{M}_k$. One finds

\[  (\mathcal{I}_k \otimes \Phi_{\alpha,\beta})(vv^\dagger) = \sum_{i,j=1}^k e_{ij} \otimes X_{ij} , \]
where the blocks $X_{ij} \in \Md$ read

\[  X_{ii} = \left( 1 - \frac{d-1}{d} \alpha \right) e_{ii} + \frac{\alpha}{d}\, (\I_d - e_{ii})\ , \quad X_{ij} = (1 - \alpha - \beta) e_{ij} \ ; (i \neq j) . \]
One easily finds the corresponding eigenvalues of $(\mathcal{I}_k \otimes \Phi_{\alpha,\beta})(vv^\dagger)$:

\[ \left\{ \frac{\alpha}{d}\ , \frac{\alpha}{d}+\beta \ , kd-(kd-1)\alpha-d(k-1)\beta \right\}\ .\]
Now observe that the line 

\[ \frac{\alpha}{d}+\beta = 0 ,\]
connects $\Psi_0$ and $\Psi_2$, whereas the line

\[ kd-(kd-1)\alpha-d(k-1)\beta = 0 , \]
connects $\Psi_2$ and $\mathcal{T}_k$. Indeed, if $\beta=0$ then $\alpha = \alpha_k = \frac{dk}{dk-1}$ which corresponds to $\mathcal{T}_k$, and if $\beta=- \frac{1}{d-1}$, then $\alpha= \frac{d}{d-1}$ which corresponds to $\Psi_2$. This shows that indeed any $k$-positive $\Phi_{\alpha,\beta}$ with $\beta\leq 0$ belongs to  $\mathcal{P}^-_k$. \end{proof}

\begin{figure}[htbp]
\centering
\begin{tikzpicture}
\begin{axis}[
    axis lines=middle,
    xmin=-0.05, xmax=1.45,
    ymin=-0.55, ymax=1.35,
    xlabel={$\,\alpha$},
    ylabel={$\,\beta$},
    xtick={0,1},
    ytick=\empty, 
    width=0.52\textwidth,
    height=0.46\textwidth,
    legend style={at={(0.02,0.98)},anchor=north west,draw=none,fill=none},
    clip=false,
]

\pgfmathsetmacro{\d}{6}
\pgfmathsetmacro{\k}{2}

\pgfmathsetmacro{\a}{\d/(\d-1)}          
\pgfmathsetmacro{\b}{1/(\d-1)}           
\pgfmathsetmacro{\ak}{1 + 1/(\k*\d - 1)} 

\addplot[
  fill=green!30,
  draw=green!55!black,
  thick
] coordinates {
  (0,0)        
  (0,\a)       
  (\a,-\b)     
  (0,0)
};

\addplot[
  fill=blue!22,
  draw=blue!65!black,
  thick
] coordinates {
  (0,\a)       
  (\a,-\b)     
  (\ak,0)      
  (0,\a)
};

\addplot[
  black,
  thick,
  dashed
] coordinates {
  (0,0)
  (\ak,0)
};

\addplot[only marks,mark=*,mark size=2pt] coordinates {
  (0,0)        
  (0,\a)       
  (\a,-\b)     
  (\ak,0)      
};

\node[below left]  at (0,0) {$\Psi_0$};
\node[left]        at (0,\a) {$\Psi_1$};
\node[below right] at (\a,-\b) {$\Psi_2$};
\node[above right] at (\ak,0) {$\mathcal{T}_k$};

\end{axis}
\end{tikzpicture}
\caption{Illustration of Conjecture~3.1.
The completely positive region $\mathcal P_{\mathrm{CP}}=\mathrm{conv}\{\Psi_0,\Psi_1,\Psi_2\}$ is shown in green.
The additional triangle $\mathrm{conv}\{\Psi_1,\Psi_2,\mathcal{T}_k\}$ (blue) represents the conjectured extension to the set of $k$-positive maps.    \label{FIG2}
}
\end{figure}
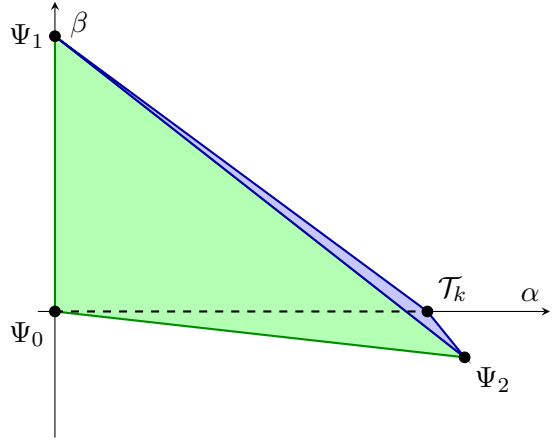
Note, that $\mathcal{T}_d$ belongs to the segment ${\rm conv}\{\Psi_1,\Psi_2\}$. Indeed, one has

\begin{equation}
    \mathcal{T}_d(X) = \frac{1}{d^2-1} \Big( d\,\I_d \Tr X - X \Big) ,
\end{equation}
and hence 
\begin{equation}
    \mathcal{T}_d = \frac{1}{d+1} \Big( \Psi_1 + d\, \Psi_2 \Big) . 
\end{equation}

\begin{proposition} If $d=\ell k$, i.e. $k$ divides $d$, then Conjecture \ref{con} holds. 
    
\end{proposition}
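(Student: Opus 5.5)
Since $\Psi_0,\Psi_1,\Psi_2$ are CP and $\mathcal{T}_k$ is $k$-positive by Tomiyama's theorem, and $k$-positivity is a convex condition, $\mathcal{P}_k$ consists of $k$-positive maps. By the previous Proposition, every $k$-positive $\Phi_{\alpha,\beta}$ with $\beta\le 0$ lies in $\mathcal{P}^-_k$. So it remains to treat $\beta\ge 0$ and show that such a $k$-positive map lies in $\mathcal{P}^+_k={\rm conv}\{\Psi_0,\Psi_1,\mathcal{T}_k\}$.

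Two sides of $\mathcal{P}^+_k$ are easy. The side $\alpha=0$ (from $\Psi_0$ to $\Psi_1$) is forced by positivity, via Proposition~\ref{PCP}. The side $\beta=0$ is the half-plane we are in. The only nontrivial side is the segment from $\Psi_1=(0,\tfrac{d}{d-1})$ to $\mathcal{T}_k=(\alpha_k,0)$. We must show that $k$-positivity forces $(\alpha,\beta)$ to lie below the line through these two points. This calls for a test vector $w\in\mathbb{C}^k\otimes\mathbb{C}^d$ for which the smallest eigenvalue of $(\mathcal{I}_k\otimes\Phi_{\alpha,\beta})(ww^\dagger)$ vanishes exactly on that line.

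The divisibility $d=\ell k$ suggests the following choice. Split $\{1,\dots,d\}$ into $k$ blocks $B_1,\dots,B_k$ of size $\ell$ and put
\[ f_i=\ell^{-1/2}\sum_{a\in B_i} e_a,\qquad w=\sum_{i=1}^k e_i\otimes f_i . \]
The $f_i$ are orthonormal and, unlike $\sum_i e_i\otimes e_i$, they are delocalized. Delocalization is what lets the $\Delta$ term hurt: $\Delta$ kills the off-diagonal parts of $f_if_i^\dagger$, just as for the all-ones matrix in the proof of Proposition~\ref{PCP}. The computation uses
\[ \Phi_{\alpha,\beta}(f_if_j^\dagger)=\gamma f_if_j^\dagger+\tfrac{\alpha}{d}\delta_{ij}\I_d+\beta\,\Delta(f_if_j^\dagger),\qquad \Delta(f_if_j^\dagger)=\tfrac{1}{\ell}\delta_{ij}P_{B_i}, \]
where $P_{B_i}$ is the projector onto ${\rm span}\{e_a:a\in B_i\}$. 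The resulting operator is
\[ \gamma\, ww^\dagger+\tfrac{\alpha}{d}\,\I_k\otimes\I_d+\tfrac{\beta}{\ell}\sum_i e_{ii}\otimes P_{B_i}. \]

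Its smallest eigenvalue is obtained by restricting to suitable subspaces. The relevant one is probably not along $w$ itself, since $w$ is an eigenvector with eigenvalue $k\gamma+\tfrac{\alpha}{d}+\tfrac{\beta}{\ell}$, which gives only the weak constraint. Instead we look at vectors orthogonal to $w$ and supported on $\bigoplus_i e_i\otimes {\rm span}\,B_i$, and possibly at $ww^\dagger$ mixed with vectors outside the blocks. We then check whether the minimal eigenvalue vanishes along the line through $(0,\tfrac{d}{d-1})$ and $(\alpha_k,0)$. If this $w$ is not sharp, the fallback is a one-parameter family $w_t=\sum_i e_i\otimes(\sqrt{t}\,e_{a_i}+\sqrt{1-t}\,f_i')$ interpolating between the vector of the previous Proposition and the delocalized one. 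We would then optimize over $t$, using $d=\ell k$ to keep the structure block-symmetric so that the eigenproblem reduces to a $2\times 2$ or $3\times 3$ matrix.

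Once the required half-plane inequality is established, intersecting it with $\alpha\ge0$, $\beta\ge0$ gives $\mathcal{P}^+_k$. Combined with the previous Proposition, this yields $\mathcal{P}_k$. The main obstacle is this eigenvalue analysis: finding the test vector whose critical line passes through both $\Psi_1$ and $\mathcal{T}_k$. I would first verify the endpoints numerically. At $\mathcal{T}_k$ the vector $\sum_i e_i\otimes e_i$ is already critical. At $\Psi_1$ the map is CP, so criticality there has to come from the delocalized part, and $\ell=d/k$ enters precisely through that part.
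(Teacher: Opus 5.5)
\textbf{There is a genuine gap.} Your reduction to $\beta\ge 0$ is correct. So is your test vector, which is exactly the paper's $w$ up to normalization, and so is your formula for $(\mathcal{I}_k\otimes\Phi_{\alpha,\beta})(ww^\dagger)$. But the proposal never derives the half-plane inequality, which is the whole content of the statement.

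It goes wrong at one concrete point. You dismiss the eigenvalue along $w$ as giving only a weak constraint, when it is exactly the sharp one. Multiplying by $d=k\ell$,
\[
d\Big(k\gamma+\frac{\alpha}{d}+\frac{\beta}{\ell}\Big)=kd(1-\alpha-\beta)+\alpha+k\beta=kd-(kd-1)\alpha-k(d-1)\beta .
\]
So nonnegativity of this eigenvalue is precisely $(kd-1)\alpha+k(d-1)\beta\le kd$. Its boundary is the line through $\Psi_1=(0,\frac{d}{d-1})$ and $\mathcal{T}_k=(\alpha_k,0)$:
\begin{itemize}
\item at $\Psi_1$ one has $k\gamma=-\frac{k}{d-1}=-\frac{\beta}{\ell}$;
\item at $\mathcal{T}_k$ one has $k\gamma=-\frac{k}{kd-1}=-\frac{\alpha_k}{d}$.
\end{itemize}
So the single vector $w$ is already critical at both endpoints.

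This is exactly how the paper concludes. It passes to the principal $d\times d$ submatrix on $\bigoplus_i e_i\otimes{\rm span}\{e_a: a\in B_i\}$. That submatrix has constant row sum $\lambda=d\gamma+\frac{\alpha}{k}+\beta$, which is $\ell$ times your eigenvalue, and the inequality follows from $\lambda\ge 0$.

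The alternatives you propose would have led nowhere. On the block support the operator is $\gamma ww^\dagger+(\frac{\alpha}{d}+\frac{\beta}{\ell})$ times the identity. On the orthogonal complement it is $\frac{\alpha}{d}$ times the identity. Its spectrum is therefore $\{k\gamma+\frac{\alpha}{d}+\frac{\beta}{\ell},\ \frac{\alpha}{d}+\frac{\beta}{\ell},\ \frac{\alpha}{d}\}$. For $\alpha,\beta\ge 0$, every vector orthogonal to $w$, on or off the blocks, gives a nonnegative eigenvalue, and the interpolating family $w_t$ is unnecessary.

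Once you keep the $w$-eigenvalue, the rest of your outline finishes the proof as in the paper: $\alpha\ge 0$ from positivity, $\beta\ge 0$ by assumption, convexity for the converse inclusion, and the previous proposition for $\beta\le 0$.
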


\begin{proof}  
  {It is sufficient to prove that for $\beta \geq 0$ and $d=\ell k$, $k$-positivity region for the map $\Phi_{\alpha,\beta}$ is $\mathcal{P}^+_k$.}  
  Consider a vector  $w \in \mathbb{C}^k \otimes \mathbb{C}^{\ell k}$ defined by

\[  w = e_1 \otimes w_1 + \ldots + e_k \otimes w_k , \]
with
\[ w_1=f_1+ \ldots + f_\ell \ , \ w_2= f_{\ell+1} + \ldots + f_{2\ell} \ , \ldots \ , 
  w_k = f_{(k-1)\ell + 1} +  \ldots + f_{k\ell}  , \]
where $\{f_1,\ldots,f_{\ell k}\}$ denotes a canonical basis in $\mathbb{C}^{\ell k}$. 
If $ \Phi_{\alpha,\beta}$ is $k$-positive, then $(\mathcal{I}_k \otimes \Phi_{\alpha,\beta})(ww^\dagger) \geq 0$. 
One finds

\[  (\mathcal{I}_k \otimes \Phi_{\alpha,\beta})(ww^\dagger) = \sum_{i,j=1}^k e_{ij} \otimes X_{ij} , \]
where the blocks $X_{ij} \in \Md$ read

\[ X_{ii} =  (1 - \alpha - \beta) w_i w_i^\dagger +  \frac{\alpha}{k} \, \I_{d} + \beta \sum_{j = (i-1)k+1}^{(i-1)k + \ell} f_{jj}   \ ,   \]
where $f_{ij} = f_i f_j^\dagger$ and $ X_{ij} = (1 - \alpha - \beta) \, w_i w^\dagger_j$ for $i \neq j$.
Consider now a principal $k\ell \times k\ell$ submatrix defined by

\[  W= \sum_{i,j=1}^k e_{ij} \otimes W_{ij} \ , \]
with diagonal blocks

\[ W_{ii} := (1 - \alpha - \beta)\, w_i w_i^\dagger + \left( \frac{\alpha}{ k} + \beta\right) \!\! \sum_{j = (i-1)k+1}^{(i-1)k + \ell} f_{jj} \ , \]
and off-diagonal blocks $W_{ij} := X_{ij}$.  Being a principal submatrix of a positive $k^2\ell \times k^2\ell$ matrix $(\mathcal{I}_k \otimes \Phi_{\alpha,\beta})(ww^\dagger)$ it must be positive as well.  Note that $W$ has the constant row sum

\[\lambda = 1-\frac{(k-1)\alpha}{k}+(\ell-1)(1-\alpha-\beta)+\ell(k-1)(1-\alpha-\beta) ,\]
and hence $\lambda$ defines its eigenvalue. 
{If $ \Phi_{\alpha,\beta}$ is $k$-positive then $\lambda \geq 0$, which after simplification gives 
%
\begin{equation} \label{dk}
  (kd-1)\alpha+k(d-1)\beta\leq kd  .   
\end{equation}
Note that the line joining  maps $\Psi_1$ and $\mathcal{T}_k$ is given by $(kd-1)\alpha+k(d-1)\beta = kd$. Both maps $\Psi_1$ and $\mathcal{T}_k$ are $k$-positive and hence (\ref{dk}) implies that for $\beta \geq 0$ and $d=\ell k$, the $k$-positivity region of $\Phi_{\alpha,\beta}$ is $\mathcal{P}^+_k$.}
\end{proof}

\begin{corollary} If $d$ is even, then any 2-positive map $\Phi_{\alpha,\beta}$ belongs to $\mathcal{P}_2$.
    
\end{corollary}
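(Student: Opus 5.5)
The corollary should follow directly from the preceding proposition with $k=2$. If $d$ is even, then $d=2\ell$ with $\ell=d/2$, so $k=2$ divides $d$. The proposition then says Conjecture \ref{con} holds for $k=2$: the set of 2-positive maps $\Phi_{\alpha,\beta}$ is exactly $\mathcal{P}_2={\rm conv}\{\Psi_0,\Psi_1,\Psi_2,\mathcal{T}_2\}$. In particular every 2-positive $\Phi_{\alpha,\beta}$ lies in $\mathcal{P}_2$.

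For a self-contained write-up, I would split by the sign of $\beta$, mirroring $\mathcal{P}_2=\mathcal{P}^+_2\cup\mathcal{P}^-_2$.

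\emph{Case $\beta\le 0$.} Apply the first $k$-positivity proposition with $k=2$, which holds for every $d$. Testing with $v=e_1\otimes e_1+e_2\otimes e_2$ gives three necessary conditions: $\alpha\ge 0$, $\frac{\alpha}{d}+\beta\ge 0$, and $2d-(2d-1)\alpha-d\beta\ge 0$. Together with $\beta\le 0$, these cut out exactly the triangle $\mathcal{P}^-_2$.

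\emph{Case $\beta\ge 0$.} Use the test vector $w$ from the divisibility proposition with $k=2$ and $\ell=d/2$. Its principal-submatrix eigenvalue argument yields $(2d-1)\alpha+2(d-1)\beta\le 2d$, which is the line through $\Psi_1$ and $\mathcal{T}_2$. Together with $\alpha\ge 0$ (nonnegativity of the diagonal of a block-positive Choi matrix) and $\beta\ge 0$, this places the map in $\mathcal{P}^+_2$.

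The only point needing care is that these constraints, intersected with each half-plane, give exactly the triangles $\mathcal{P}^\pm_2$ and nothing larger. It suffices to check that the bounding lines meet at the listed vertices $\Psi_0,\Psi_1,\Psi_2,\mathcal{T}_2$, as was already verified in the proofs above. There is no real obstacle beyond quoting these results with $k=2$.
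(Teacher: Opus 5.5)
Your proposal is correct and matches the paper: the corollary is the divisibility proposition specialized to $k=2$, $\ell=d/2$. Your unpacking follows the paper's own argument, with the $v$-test cutting out $\mathcal{P}^-_2$ for $\beta\le 0$ and the $w$-test giving the line $(2d-1)\alpha+2(d-1)\beta=2d$ through $\Psi_1$ and $\mathcal{T}_2$ for $\beta\ge 0$; your explicit use of $\alpha\ge 0$ to close off the triangle $\mathcal{P}^+_2$ is a small detail the paper leaves implicit.
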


\begin{proposition} If $k=d-1$, then Conjecture \ref{con} holds. 
    
\end{proposition}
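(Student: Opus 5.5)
By the Proposition on $\beta\le 0$ it suffices to treat $\beta\geq 0$. There we must show that every $(d-1)$-positive $\Phi_{\alpha,\beta}$ satisfies
\[
  (kd-1)\alpha+k(d-1)\beta\leq kd ,\qquad k=d-1 .
\]
This inequality is the line through $\Psi_1$ and $\mathcal{T}_k$. Combined with $0\le\alpha$ and $\beta\ge 0$, it gives $\Phi_{\alpha,\beta}\in\mathcal{P}^+_k$; the converse inclusion is clear because $\Psi_0,\Psi_1,\mathcal{T}_k$ are $k$-positive. I will copy the proof of the case $k\mid d$: choose a vector $w\in\mathbb{C}^k\otimes\mathbb{C}^d$, compute $(\mathcal{I}_k\otimes\Phi_{\alpha,\beta})(ww^\dagger)$, pass to a suitable principal submatrix, and read off an eigenvalue whose nonnegativity gives the required inequality.

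Since $d=k+1$ no longer splits into equal blocks, I will use an unbalanced, weighted analogue of the earlier vector:
\[
 w=e_1\otimes(f_1+s f_d)+e_2\otimes(f_2+s f_d)+\ldots+e_k\otimes(f_k+s f_d),
\]
with a real parameter $s\geq 0$. The point is to spread the one leftover basis vector $f_d$ symmetrically over all $k$ blocks. The blocks are then
\[
X_{ii}=\gamma\,w_iw_i^\dagger+\frac{\alpha}{d}\|w_i\|^2\I_d+\beta\,\Delta(w_iw_i^\dagger),\qquad X_{ij}=\gamma\,w_iw_j^\dagger+\beta s^2 f_{dd}\quad (i\ne j),
\]
where $\gamma=1-\alpha-\beta$. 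The $\Delta$-term now contributes to the off-diagonal blocks through the common component $f_d$, which is new compared with the case $k\mid d$. I will restrict to the principal submatrix supported on $\{e_i\otimes f_i,\ e_i\otimes f_d\}_{i=1}^k$. The resulting matrix is invariant under simultaneous permutations of the $e_i$, so on the symmetric sector it reduces to a $2\times 2$ matrix in the variables coming from $\sum_i e_i\otimes f_i$ and $\sum_i e_i\otimes f_d$. Its determinant must be nonnegative, which gives a condition $D_s(\alpha,\beta)\ge 0$, quadratic in the parameters, for every $s$.

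The second step is to choose $s$, or intersect over all $s$, so that the boundary curve $D_s=0$ in the region $\beta\ge 0$ reaches the line through $\Psi_1=(0,\tfrac{d}{d-1})$ and $\mathcal{T}_k=(\alpha_k,0)$. A natural first check is that $s=0$ gives back the Proposition on $\beta\le 0$: the eigenvalue $kd-(kd-1)\alpha-d(k-1)\beta$ from there is the line $\Psi_2\mathcal{T}_k$, which is the wrong one. Thus $s>0$ is essential, and I expect the optimal $s^2$ to be of order $1/k$, so that $f_d$ carries the weight of one extra basis direction shared among the blocks.

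The main obstacle is exactly this choice. It is not clear a priori that a single rank-$k$ vector, rather than a mixture, produces the straight line $\Psi_1\mathcal{T}_k$. If the determinant condition only gives a curve tangent to that line, I will take the envelope over $s$. Because the family is convex and the endpoints $\Psi_1,\mathcal{T}_k$ are already known to be $k$-positive, it is enough to show that one point strictly beyond the line, on each ray from $\Psi_0$ with $\beta\ge0$, is excluded. That reduces the task to checking, for each such ray, that some $s$ makes $D_s<0$.
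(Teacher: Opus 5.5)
There is a genuine gap. Once the blocks are computed correctly, the principal submatrix on $\{e_i\otimes f_i,\ e_i\otimes f_d\}_{i=1}^k$ does not reach the line through $\Psi_1$ and $\mathcal{T}_k$ for any $s$. First, your off-diagonal blocks miss a term: for $i\neq j$ one has $\Tr(w_iw_j^\dagger)=s^2\neq 0$, so $X_{ij}=\gamma\,w_iw_j^\dagger+\frac{\alpha}{d}s^2\,\I_d+\beta s^2 f_{dd}$. With this correction, and writing $t=s^2$, the remaining sectors give only the eigenvalues $\frac{\alpha}{d}(1+t)+\beta$ and $\frac{\alpha}{d}$. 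The symmetric sector is
\[
\begin{pmatrix} k\gamma+\frac{\alpha}{d}(1+t)+\beta & k\gamma s\\ k\gamma s & k\bigl(1-\frac{d-1}{d}\alpha\bigr)t+\frac{\alpha}{d}\end{pmatrix} .
\]
Take $d=3$, $k=2$, where the target line is $5\alpha+4\beta=6$, and the point $(\alpha,\beta)=(\frac35,\frac{19}{25})$, which lies beyond it. There the $(1,1)$ entry is $\frac{6+5t}{25}>0$, and the determinant is $(150t^2-119t+30)/625$. This is positive for every $t$, since $119^2<4\cdot150\cdot30$. So no value of $s$, and no envelope over $s$, excludes this point, although it is not $2$-positive (see below).

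Your fallback does not help. By convexity, excluding one point on a ray only excludes the points further out, so you need tests that are tight on the segment $\Psi_1\mathcal{T}_k$ itself. Note also that your block formula as written is misleading: at $s^2=1/k$ the determinant factorizes as $q\bigl((k+1)\gamma+q\bigr)$ with $q=\frac{\alpha}{k}+\beta$, which reproduces exactly the desired line. That apparent success is an artefact of the dropped term.

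The missing idea is to determine which tests can be tight. For $z=\sum_a e_a\otimes z_a$ and $H=\sum_a w_az_a^\dagger$ one has $z^\dagger(\mathcal{I}_k\otimes\Phi_{\alpha,\beta})(ww^\dagger)z=\gamma|\Tr H|^2+\frac{\alpha}{d}\Tr(H^\dagger H)+\beta\sum_i|H_{ii}|^2$. Evaluating at $\mathcal{T}_k$ and at $\Psi_1$ shows that this vanishes along the segment only if $|\Tr H|^2=k\Tr(H^\dagger H)$ and $|\Tr H|^2=d\sum_i|H_{ii}|^2$. That is, $H$ must be a multiple of a rank-$k$ projector with constant diagonal $k/d$. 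For $k=d-1$ this means $H\propto\I_d-uu^\dagger$ with $|u_i|^2=1/d$, which forces $\mathrm{span}\{w_a\}=u^\perp$.

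Your span is $u^\perp$ with $u\propto(s,\dots,s,-1)$, so only $s=1$ can work. The corresponding $z_a=f_a-\frac1d\sum_{b\le k}f_b+\frac1d f_d$ has components along every $f_b$, so it lies outside your principal submatrix. With these choices the form equals $\frac{k}{d}\bigl(kd-(kd-1)\alpha-k(d-1)\beta\bigr)$, which is negative beyond the line; for the point above it equals $-\frac{2}{75}$. So you must use the full matrix at $s=1$.

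This is essentially the paper's argument. Up to a diagonal sign change, under which $\Phi_{\alpha,\beta}$ is covariant, and a change of basis in $\mathbb{C}^k$, your $s=1$ vector is the paper's chain vector $x=\sum_i e_i\otimes(f_i+f_{i+1})$. Its blocks span the complement of the alternating vector $(1,-1,1,\dots)$. The paper then exhibits an explicit zero mode $\psi$ of the full matrix on the line, and $\psi$ is precisely such an optimal $z$.
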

\begin{proof}

One needs to show that the line connecting $\Psi_1$ and $\mathcal{T}_k$ defines the boundary of a set of $k$-positive maps (for $\beta \geq 0$).  The line is defined by

\begin{equation}   \label{beta}
    \beta = \frac{k+1}{k} - \frac{k^2+k+1}{k^2}\, \alpha . 
\end{equation}
Define the following vector $x \in \mathbb{C}^k \otimes \mathbb{C}^{k+1}$

\begin{equation}
    x = \sum_{i=1}^k e_i \otimes x_i \ , \ \ x_i = f_i + f_{i+1} .
\end{equation}
One finds

\begin{equation}
  X =  ({\rm id}_k \otimes \Phi_{\alpha,\beta})(|x\>\<x|) = \sum_{i,j=1}^k e_{ij} \otimes X_{ij} .
\end{equation}
Inserting $\beta$ from (\ref{beta}) one gets

$$ X = A + \alpha\, B , $$
where $A = \sum_{i,j=1}^k e_{ij} \otimes A_{ij}$ and $B=\sum_{i,j=1}^k e_{ij} \otimes B_{ij}$ are block matrices with
$d \times d$ blocks $A_{ij}$ and $B_{ij}$  defined as follows:

\begin{eqnarray}
    A_{ii} &=&  - \frac 1k |x_i\>\<x_i| + \frac{k+1}{k} ( f_{ii} + f_{i+1,i+1} ) \ , \nonumber \\
    A_{i,i+1} &=&  - \frac 1k |x_i\>\<x_{i+1}| +  \frac{k+1}{k} \, f_{i+1,i+1} \ , \\
    A_{ij} &=&  - \frac 1k |x_i\>\<x_j|  \ , \ \ \   (j > i+1) , \nonumber
\end{eqnarray}
and 

\begin{eqnarray}
    B_{ii} &=&  \frac{k-1}{k^2} (f_{i,i+1}+f_{i+1,i}) + \frac{k-1}{k+1} \I_d - (f_{ii} + f_{i+1,i+1})   \ , \nonumber \\
    B_{i,i+1} &=&  \frac{k-1}{k^2} |x_i\>\<x_{i+1}| +  \frac{1}{k+1}\I_d -  f_{i+1,i+1} \ , \\
    B_{ij} &=&  \frac{k-1}{k^2} |x_i\>\<x_j|  \ , \ \ \   (j > i+1) . \nonumber
\end{eqnarray}
To finish the proof, it suffices to show that $X = A + \alpha B$ has a zero eigenvalue. Interestingly, it turns out that there exists a zero mode $\psi\in \mathbb{C}^k \otimes \mathbb{C}^{k+1}$ independent of $\alpha$, that is,

$$   A \psi= 0 \ , \quad B\psi=0 . $$
The structure of $A$ and $B$ implies a {\em mirror} symmetry of $\psi$, i.e. 

\[ (\psi_i)_\ell =  (\psi_{k-i})_{d-\ell} \ , \ \ i=1,2,\ldots,k \ ; \ \ell=1,\ldots,d . \]
Let $\psi = \sum_{i=1}^k e_i \otimes \psi_i$. Define 

\begin{equation}
    m_j = (-1)^{j+1} (d-j) .
\end{equation}
Then for $k = 2n$ (equivalently $d=2n+1$) one has 
\begin{equation}
    \psi_j = \Big(\underbrace{m_j,-m_j,m_j,\ldots,(-1)^{j+1} m_j}_{j},\underbrace{j,-j,j,\ldots,(-1)^{j+1} j}_{d-j} \Big)^{\rm T} 
\end{equation}
for $j =1, \ldots,n$. For $k = 2n-1$ (equivalently $d=2n$) one has

\begin{equation}
    \psi_j = \Big(\underbrace{m_j,-m_j,m_j,\ldots,(-1)^j m_j}_{j},\underbrace{j,-j,j,\ldots,(-1)^j j}_{d-j} \Big)^{\rm T} 
\end{equation}
for $j =1, \ldots,n$ and

\begin{equation}
    \psi_{n+1} = \Big(\underbrace{n,-n,n,\ldots,-n,n}_{n},\underbrace{n,-n,\ldots, -n,n }_{n} \Big)^{\rm T} 
\end{equation}
for odd $n$, and 

\begin{equation}
    \psi_{n+1} = \Big(\underbrace{-n,n,-n,\ldots,-n,n}_{n},\underbrace{n,-n,\ldots, -n,n,-n }_{n} \Big)^{\rm T} 
\end{equation}
for even $n$. One checks that indeed $\psi$ defines a zero mode for $A$ and $B$ and hence it defines a zero mode for $X = A + \alpha B$. \end{proof}
In Appendix we illustrate the construction of matrices $A$ and $B$ together with a zero mode $\psi$ for $k=3$ and $k=4$.  

\begin{remark}
{\em   In Ref. \cite{Cho} authors considered the map $\Phi_{[a,b,c]} : M_3 \to M_3$

\begin{equation}
    \Phi_{[a,b,c]}(X) = \Psi_{[a,b,c]}(X) - X , 
\end{equation}
where $\Psi_{[a,b,c]}(X)$ denotes the following diagonal matrix

\begin{equation*}
     \begin{pmatrix} (a+1) x_{11} + b x_{22} + c x_{33} & 0 & 0 \\ 0 &  (a+1) x_{22} + b x_{33} + c x_{11} & 0 \\ 0 & 0 &  (a+1) x_{33} + b x_{11} + c x_{22}         
    \end{pmatrix} ,
\end{equation*}
and $a,b,c \geq 0$ (note, that we changed parameterization from $a \to a+1$). This map is self-adjoint if and only if $b=c$ and in this case it belongs to the class $\Phi_{\alpha,\beta}$ (up to  a multiplication factor). Indeed, one finds

\[   \Phi_{[a,b,b]} = (a+2b)\, \Phi_{\alpha,\beta} \ , \]
with 
\[ a = \frac{1- \frac 23 \alpha}{\alpha+\beta - 1}\ , \quad b=c = \frac{\alpha}{3(\alpha+\beta - 1)} . \]
Theorem 4.2 in \cite{Cho} states that the above map is 2-positive if and only if $a \geq 2$ or the following condition is satisfied ((4.3) in \cite{Cho}):

$$  b^2 = (2-a)(b+c) > 0 .   $$
Now, since in our class $b=c$ one gets $b = 2(2-a)$. The condition $a = 2$ is equivalent to 

$$   8 \alpha + 6 \beta = 9 , $$
and defines the line passing through two CP maps $\Psi_1$ and $\Psi_2$ corresponding to $(0,\frac 32)$ and $(\frac 32,-\frac12)$ (cf. Fig. \ref{FIG-3}). On the other hand, condition $b = 2(2-a)$ is equivalent to 

$$   5 \alpha + 4 \beta = 6 , $$
and defines a line passing through a CP map $\Psi_1$ and a 2-positive map $\mathcal{T}_2$ corresponding to $(\frac 65,0)$. 
Hence they perfectly coincide with the conditions derived in the present paper. Note however that for $\beta < 0$ the line $5 \alpha + 4 \beta = 6$ goes outside the 2-positivity region. Hence, Theorem 4.2 from \cite{Cho} correctly characterizes 2-positivity for $b=c$ case only for $\beta \geq 0$, that is, using 

\[
\alpha = \frac{3b}{a+2b},
\qquad
\beta = \frac{a-b+1}{a+2b}.
\]
$\beta > 0$ is equivalent to $a>b-1$. Note, that on the line passing through $\Psi_1$ and $\mathcal{T}_2$ one has

$$ a=1 \ , \quad b = \frac{\alpha}{3-2\alpha} \ , \ \ \ \alpha \in \Big[\frac 65,\frac 32\Big) . $$
Summarizing: for $\beta \geq 0$ the 2-positive self-adjoint trace-preserving maps (i.e. $b=c$) from \cite{Cho} belong to the boundary of 2-positivity region $\mathcal{P}_2$. 

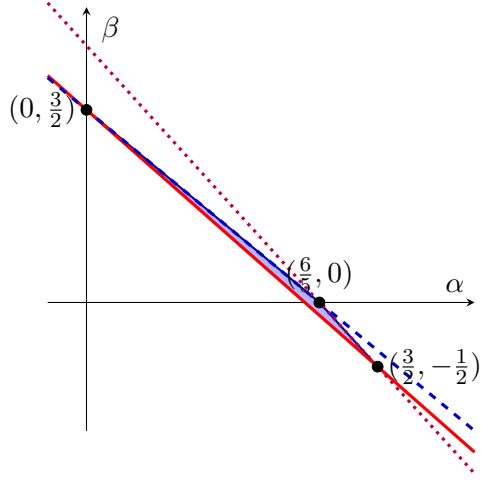
\begin{figure}
    \centering
 
\begin{tikzpicture}
\begin{axis}[
    axis lines=middle,
    xmin=-0.2, xmax=2,
    ymin=-1, ymax=2.3,
    xlabel={$\,\alpha$},
    ylabel={$\,\beta$},
    xtick=\empty,
    ytick=\empty,
    width=0.45\textwidth,
    height=0.45\textwidth,
    clip=false,
]

\addplot[
  fill=blue!30,
  draw=blue!65!black,
  thick
] coordinates {
  (0,3/2)
  (6/5,0)
  (3/2,-1/2)
  (0,3/2)
};

\addplot[
  red,
  very thick,
  domain=-0.2:2
] {3/2 - (4/3)*x};

\addplot[
  blue!80!black,
  very thick,
  dashed,
  domain=-0.2:2
] {3/2 - (5/4)*x};

\addplot[
  purple,
  very thick,
  dotted,
  domain=-0.2:2
] {2 - (5/3)*x};

\addplot[only marks,mark=*,mark size=2pt] coordinates {
  (0,3/2)
  (6/5,0)
  (3/2,-1/2)
};

\node[left]  at (axis cs:0,3/2) {$(0,\tfrac{3}{2})$};
\node[above] at (axis cs:6/5,0) {$(\tfrac{6}{5},0)$};
\node[right] at (axis cs:3/2,-1/2) {$(\tfrac{3}{2},-\tfrac{1}{2})$};

\end{axis}
\end{tikzpicture}
   \caption{A solid red line defines the border of CP region. The dashed blue line from \cite{Cho} belongs to the boundary of $\mathcal{P}_2$ for $\beta \geq 0$ but goes outside $\mathcal{P}_2$ for $\beta < 0$. The dotted red line defines the boundary of $\mathcal{P}_2$ for $\beta \leq 0$.  \label{FIG-3}}
\end{figure}}
\end{remark}

\section{A class  of linear maps $\Lambda_{\mu,\nu}$}  \label{SII}

In this Section, we analyze generalization of the class $\Lambda_{\mu,\nu}$.  

\begin{proposition}
The linear map $\Lambda_{\mu,\nu}$ is 

\begin{enumerate}
    \item positive iff

\begin{equation}
    0 \leq \mu \leq \frac{d}{d-1} \ , \ \ - \frac{2}{d} \mu \leq \nu \leq \frac{d}{d-1} - \mu \ ,
\end{equation}
\item $k$-positive for $k=2,\ldots,d$ iff

\begin{equation}  \label{2p}
      0 \leq \mu \leq \frac{d}{d-1} \ , \ \ 1 - \frac{d+1}{d}\mu \leq \nu \leq 1 - \frac{d-1}{d} \mu \ . 
\end{equation}

\end{enumerate}
    
\end{proposition}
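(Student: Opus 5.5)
The key observation is that $T\circ\tau_0=\tau_0$ and $T\circ\Delta=\Delta$, so
\[
\Lambda_{\mu,\nu}=T\circ\Phi_{\mu,\nu}.
\]
Since $T$ maps positive matrices to positive matrices, $\Lambda_{\mu,\nu}$ is positive iff $\Phi_{\mu,\nu}$ is. Part 1 therefore follows directly from Proposition~\ref{PCP}, part 1, with $(\alpha,\beta)=(\mu,\nu)$. Part 2 does not transfer this way, because composing with $T$ destroys $k$-positivity. I will show that for every $k\ge 2$ the $k$-positivity region coincides with the CP region, and that the latter is given by (\ref{2p}). The chain of implications is CP $\Rightarrow$ $k$-positive $\Rightarrow$ $2$-positive $\Rightarrow$ (\ref{2p}) $\Rightarrow$ CP.

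\textbf{Step 1 (CP region).} The Choi matrix of $\Lambda_{\mu,\nu}$ is the partial transpose of (\ref{Choi}):
\[
C=\gamma F+\frac{\mu}{d}\,\I_d\otimes\I_d+\nu\sum_i e_{ii}\otimes e_{ii},\qquad \gamma=1-\mu-\nu,\quad F=\sum_{i,j}e_{ij}\otimes e_{ji}.
\]
All three terms are diagonalized by the basis $\{e_i\otimes e_i\}\cup\{e_i\otimes e_j\pm e_j\otimes e_i\}_{i<j}$. The resulting eigenvalues are:
\begin{itemize}
\item $1-\frac{d-1}{d}\mu$ on $e_i\otimes e_i$;
\item $\gamma+\frac{\mu}{d}=1-\frac{d-1}{d}\mu-\nu$ on the symmetric vectors $e_i\otimes e_j+e_j\otimes e_i$;
\item $-\gamma+\frac{\mu}{d}=\nu-1+\frac{d+1}{d}\mu$ on the antisymmetric vectors $e_i\otimes e_j-e_j\otimes e_i$.
\end{itemize}
Requiring these to be nonnegative gives $\mu\le\frac{d}{d-1}$ together with the two bounds on $\nu$ in (\ref{2p}). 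Comparing the lower and upper bounds on $\nu$ forces $\frac{2}{d}\mu\ge 0$, i.e.\ $\mu\ge0$. Hence CP is equivalent to (\ref{2p}).

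\textbf{Step 2 (2-positivity implies (\ref{2p})).} Let $P=\sum_{i,j=1}^{2}e_{ij}\otimes e_{ij}\ge0$. Then $(\mathcal{I}_2\otimes\Lambda_{\mu,\nu})(P)$ is exactly the compression of $C$ to $\mathrm{span}\{e_1,e_2\}\otimes\mathbb{C}^d$, so 2-positivity forces this compression to be positive semidefinite. The compression is still diagonal in the basis of Step 1 restricted to this subspace. That subspace contains $e_1\otimes e_1$, the symmetric vector $e_1\otimes e_2+e_2\otimes e_1$, and the antisymmetric vector $e_1\otimes e_2-e_2\otimes e_1$, so all three eigenvalue types from Step 1 appear. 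Their nonnegativity yields (\ref{2p}); for instance, testing on the antisymmetric vector directly gives the lower bound on $\nu$. Since $k$-positivity implies 2-positivity for $k\ge2$, and CP implies $k$-positivity, the chain closes.

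The only point needing care is the orthogonality bookkeeping in Step 2. One must check that the diagonal term $\sum_i e_{ii}\otimes e_{ii}$ does not couple the antisymmetric or off-diagonal symmetric vectors to anything else. It does not, because it annihilates $e_i\otimes e_j$ for $i\ne j$. This is routine, so I expect no real obstacle; the main conceptual step is recognizing $\Lambda_{\mu,\nu}=T\circ\Phi_{\mu,\nu}$ for part 1.
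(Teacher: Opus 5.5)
Your proof is correct. Part 2 is essentially the paper's argument. The paper tests $2$-positivity on $w=r_1\otimes e_1+r_2\otimes e_d$, which is your $P=\sum_{i,j=1}^{2}e_{ij}\otimes e_{ij}$ with $e_2$ relabelled as $e_d$. It obtains exactly your three eigenvalues $1-\frac{d-1}{d}\mu$, $1-\frac{d-1}{d}\mu-\nu$, $\frac{d+1}{d}\mu+\nu-1$. It then only asserts that $2$-positivity forces complete positivity; your Step~1 makes this explicit by computing the spectrum of $\gamma F+\frac{\mu}{d}\I_d\otimes\I_d+\nu\sum_i e_{ii}\otimes e_{ii}$, and correctly notes that $\mu\ge 0$ follows from the compatibility of the two $\nu$-bounds.

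One minor imprecision: the compression of $C$ to $\mathrm{span}\{e_1,e_2\}\otimes\mathbb{C}^d$ is not diagonal in the ``restricted'' Step~1 basis. The vectors $e_1\otimes e_j\pm e_j\otimes e_1$ with $j\ge 3$ leave that subspace; instead, the vectors $e_a\otimes e_j$ with $a\in\{1,2\}$, $j\ge 3$ are eigenvectors of the compression with eigenvalue $\mu/d$. This is harmless, because you only need $\langle v,Cv\rangle\ge 0$ for the three eigenvectors of $C$ that do lie in the subspace.

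Where you genuinely depart from the paper is part 1. The paper calls positivity ``straightforward'', presumably meaning a rerun for $\Lambda_{\mu,\nu}$ of the test-vector and decomposition argument of Proposition~\ref{PCP}. Your identity $\Lambda_{\mu,\nu}=T\circ\Phi_{\mu,\nu}$ (from $T\circ\tau_0=\tau_0$, $T\circ\Delta=\Delta$, $T\circ T=\mathcal{I}$) reduces it to Proposition~\ref{PCP}(1) with no computation. This buys more than brevity:
\begin{itemize}
\item It explains why the two positive regions are the same quadrilateral, vertex by vertex: $\widetilde{\mathcal{T}}_1=T\circ\Psi_0$, $\widetilde{\mathcal{P}}=T\circ\Psi_1$, $\widetilde\Psi_0=T\circ\mathcal{T}_1$, $\widetilde\Psi_1=T\circ\mathcal{P}$.
\item It gives the subsequent decomposability corollary for free, since $\Phi=\Phi_1+T\circ\Phi_2$ with $\Phi_1,\Phi_2$ completely positive gives $T\circ\Phi=\Phi_2+T\circ\Phi_1$.
\end{itemize}
The only cost is that part 1 then rests entirely on Proposition~\ref{PCP}(1) rather than being self-contained.
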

\begin{proof} The proof of positivity (corresponding to $k=1$) is straightforward. Moreover, using similar argumantes as in the case of $\Phi_{\alpha,\beta}$ it is easy to shown that the above conditions are sufficient for 2-positivity.   To show that they are also necessary  let us consider a vector $w \in \mathbb{C}^2 \otimes \mathbb{C}^d $ given by 
\begin{equation*}
    w=r_1 \otimes e_1+r_2 \otimes e_d ,
\end{equation*}
where $\lbrace r_1, r_2 \rbrace$ and  $\mathbb{C}^d$ as $ \lbrace e_1, e_2,\ldots,e_d \rbrace$ are orthonormal bases of $\mathbb{C}^2$ and $\mathbb{C}^d$. The eigenvalues of $\mathcal{I}_2\otimes \Lambda_{\mu,\nu}(|w\rangle \langle w|)$ read

$$1-\frac{(d-1)}{d}\mu\  , \ 1-\frac{(d-1)}{d}\mu-\nu \ , \ \frac{(d+1)}{d}\mu + \nu-1, $$ 
 hence, positivity of $\mathcal{I}_2\otimes \Lambda_{\mu,\nu}(Y)$ implies (\ref{2p}). One easily checks that 2-positivity already implies complete positivity.

\end{proof}

\begin{corollary} All positive maps $\Lambda_{\mu,\nu}$ are decomposable. 
\end{corollary}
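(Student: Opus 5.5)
Decomposability means writing each positive $\Lambda_{\mu,\nu}$ as $\Lambda = \Phi_1 + T\circ\Phi_2$ with $\Phi_1,\Phi_2$ completely positive. Equivalently, the Choi matrix should be $C = A + (\mathcal{I}\otimes T)B$ with $A,B\geq 0$. Since decomposable maps form a convex cone, the plan is to prove decomposability only at the extreme points of the positivity region from the preceding Proposition.

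That region is the quadrilateral
\[
0\leq \mu\leq \tfrac{d}{d-1},\qquad -\tfrac{2}{d}\mu\leq \nu\leq \tfrac{d}{d-1}-\mu .
\]
Its vertices are $(0,0)$, $(0,\tfrac{d}{d-1})$, $(\tfrac{d}{d-1},0)$ and $(\tfrac{d}{d-1},-\tfrac{2}{d-1})$. Convexity then reduces the claim to these four points.

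The key observation is the identity $T\circ \Lambda_{\mu,\nu} = \Phi_{\mu,\nu}$, since $T\circ\tau_0=\tau_0$ and $T\circ\Delta=\Delta$. Hence $\Lambda_{\mu,\nu} = T\circ\Phi_{\mu,\nu}$, and the positivity region of $\Lambda_{\mu,\nu}$ coincides exactly with that of $\Phi_{\alpha,\beta}$ in Proposition \ref{PCP} under $(\mu,\nu)=(\alpha,\beta)$. So I would simply invoke the already established Corollary that all positive $\Phi_{\alpha,\beta}$ are decomposable.

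Concretely, take a positive $\Lambda_{\mu,\nu}$. Then $\Phi_{\mu,\nu}$ is positive, so $\Phi_{\mu,\nu} = \Phi_1 + T\circ\Phi_2$ with $\Phi_1,\Phi_2$ completely positive. Composing with $T$ gives
\[
\Lambda_{\mu,\nu} = T\circ\Phi_1 + \Phi_2 ,
\]
which is again decomposable.

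If one wants this explicitly rather than via the corollary, the vertex decompositions come from the proof of Proposition \ref{PCP}. The Choi matrix of $\Phi$ on the lines $\beta=-2\alpha/d$ and $\beta=\tfrac{d}{d-1}-\alpha$ was written as $A+(\mathcal{I}\otimes T)B_\pm$. Applying $\mathcal{I}\otimes T$ swaps the roles of $A$ and $B_\pm$, which directly yields a decomposition of the Choi matrix of $\Lambda$. The vertices $(0,0)$ and $(0,\tfrac{d}{d-1})$ need a separate check, but these are clear: there $\Phi$ is $\Psi_0$ or $\Psi_1$, which are CP, so $\Lambda = T\circ\Psi_i$ is co-CP.

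The only real obstacle is justifying the identity $\Lambda_{\mu,\nu}=T\circ\Phi_{\mu,\nu}$ and the resulting coincidence of the two positivity regions. Both follow immediately, because transposition preserves positivity and commutes with $\tau_0$ and $\Delta$.
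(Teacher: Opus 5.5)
Your proof is correct, but it is organized differently from the paper. The paper gives no separate argument for this corollary. It relies on the remark that ``similar arguments as in the case of $\Phi_{\alpha,\beta}$'' apply. That is, one is meant to redo, directly for $\Lambda_{\mu,\nu}$, a splitting of the Choi matrix into a positive part plus the partial transpose of a positive part, on the two saturated lines $\nu=-\frac{2}{d}\mu$ and $\nu=\frac{d}{d-1}-\mu$, and then use convexity.

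You instead prove the identity $\Lambda_{\mu,\nu}=T\circ\Phi_{\mu,\nu}$, which the paper never states. This gives more than the corollary:
\begin{itemize}
\item It explains why the two positivity regions coincide.
\item It makes decomposability of $\Lambda_{\mu,\nu}$ an immediate consequence of decomposability of $\Phi_{\mu,\nu}$ with no new computation. Post-composition with $T$ swaps the CP and co-CP summands; on Choi matrices, $C_\Lambda=(\mathcal{I}\otimes T)C_\Phi$.
\item It identifies the vertices of the $\Lambda$-quadrilateral. Two are co-CP: $T\circ\Psi_0=\widetilde{\mathcal{T}}_1$ and $T\circ\Psi_1=\widetilde{\mathcal{P}}$. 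Two are CP: $T\circ\mathcal{T}_1=\widetilde{\Psi}_0$ and $T\circ\mathcal{P}=\widetilde{\Psi}_1$, whose Choi matrices are $\frac{1}{d-1}(\mathbb{I}_d\otimes\mathbb{I}_d-F)\geq 0$ and $\frac{1}{d-1}(\mathbb{I}_d\otimes\mathbb{I}_d+F-2\sum_i e_{ii}\otimes e_{ii})\geq 0$, with $F=\sum_{i,j}e_{ij}\otimes e_{ji}$ the swap. So convexity alone finishes the argument.
\end{itemize}
Your separate check of $(0,0)$ and $(0,\frac{d}{d-1})$ is harmless but redundant, since both points lie on the saturated lines.

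One caution about your explicit fallback: the decompositions in the paper's proof for $\Phi_{\alpha,\beta}$ cannot be copied verbatim. With $f_{ij}=(e_i+e_j)\otimes(e_i+e_j)$ (and likewise $g_{ij}$), the operator $f_{ij}f_{ij}^\dagger$ is invariant under $\mathcal{I}\otimes T$. It also contains terms such as $e_{ii}\otimes e_{ij}$ that do not occur in $C_{\alpha,\beta}$. The vectors should be $e_i\otimes e_j\pm e_j\otimes e_i$.

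Moreover, on the line $\beta=\frac{d}{d-1}-\alpha$, with $B_-$ as given, the positive part is not $(1-\frac{d-1}{d}\alpha)\sum_{i,j}e_{ij}\otimes e_{ij}$. It is $A=a\sum_i e_{ii}\otimes e_{ii}+b\sum_{i\neq j}e_{ij}\otimes e_{ij}$ with $a=1-\frac{d-1}{d}\alpha$ and $b=\frac{\alpha}{d}-\frac{1}{d-1}$. This is positive semidefinite because its nontrivial block has eigenvalues $a-b=\frac{d}{d-1}-\alpha\geq 0$ and $a+(d-1)b=0$.

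With these corrections your Choi-level transfer goes through. Your main argument, which uses only the truth of the $\Phi$-corollary, is unaffected.
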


 Let us define completely positive maps corresponding to the particular points $(\mu,\nu)$:

\[   \widetilde{\Psi}_0   \leftrightarrow \left(\frac{d}{d-1},0\right) \ ,  \widetilde{\Psi}_1 \leftrightarrow \left(\frac{d}{d-1},\frac{-2}{d-1}\right) \ , \  \widetilde{\Psi}_2 \leftrightarrow \left(0,1\right) .
\]
One finds

\begin{eqnarray*}
    \widetilde{\Psi}_0(X) &=& \frac{1}{d-1} \Big( \I_d\Tr X  - X^{\rm T} \Big)\  , \\
    \widetilde{\Psi}_1(X) &=& \frac{1}{d-1} \Big( \I_d\Tr X + X^{\rm T} - 2\Delta(X)\Big) , \\
     \widetilde{\Psi}_2(X) &=& \Delta(X)  .
\end{eqnarray*}
Note, that $\widetilde{\Psi}_0$ is the celebrated Holevo-Werner channel \cite{Holevo,WH}.
Similarly define positive maps

\[  \widetilde{\mathcal{T}}_1 \leftrightarrow \left(0,0\right) \ , \ \widetilde{\mathcal{P}} \leftrightarrow \left(0,\frac{d}{d-1},\right) \ , 
\]
 that is,
\begin{eqnarray*}
    \widetilde{\mathcal{T}}_1(X) &=& X^{\rm T} , \\
     \widetilde{\mathcal{P}}(X) &=& \frac{1}{d-1} \Big( d\Delta(X) - X^{\rm T}\Big) .
\end{eqnarray*}
Note, that the map $\widetilde{\mathcal{T}}_1$ is the Tomiyama map $\Lambda_0$.  

\begin{corollary} \label{cor3} A set of completely positive maps $\Lambda_{\mu,\nu}$ forms a triangle (cf. Fig. \ref{FIG4})

$$  \widetilde{\mathcal{P}}_{\rm CP} = {\rm conv} \{\widetilde{\Psi}_0,\widetilde{\Psi}_1,\widetilde{\Psi}_2\} , $$
whereas a set of positive maps forms a quadrilateral containing CP triangle

$$ \widetilde{\mathcal{P}}_1 =  {\rm conv} \{\widetilde{\Psi}_0,\widetilde{\Psi}_1,\widetilde{\mathcal{T}}_1,\widetilde{\mathcal{P}}\} . $$
Note that $\widetilde{\Psi}_2 \in {\rm conv} \{\widetilde{\mathcal{T}}_1,\widetilde{\mathcal{P}}\}$. Indeed, 

$$\widetilde{\Psi}_2 = \frac{1}{d} \Big(\widetilde{\mathcal{T}}_1 + (d-1)\widetilde{\mathcal{P}}\Big)\ . $$
Tomiyama map  $\widetilde{\mathcal{T}}_2 := \Lambda_{\frac{d}{d+1}} \in {\rm conv} \{\widetilde{\Psi}_1,\widetilde{\Psi}_2\}  $. One finds

$$ \widetilde{\mathcal{T}}_2 =  \frac{1}{d+1} \Big( (d-1) \widetilde{\Psi}_1 + 2 \widetilde{\Psi}_2 \Big) .  $$

\end{corollary}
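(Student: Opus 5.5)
The plan is to translate the inequalities of the preceding Proposition into statements about polygons in the $(\mu,\nu)$-plane, and then to check the two explicit convex-combination identities by direct computation.

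\textbf{CP triangle.} The Proposition shows that $k$-positivity for $k\geq 2$ (equivalently complete positivity) is the system
\[
0\le\mu\le \tfrac{d}{d-1},\qquad 1-\tfrac{d+1}{d}\mu\le\nu\le 1-\tfrac{d-1}{d}\mu .
\]
The two lines $\nu=1-\frac{d+1}{d}\mu$ and $\nu=1-\frac{d-1}{d}\mu$ meet at $(0,1)$, which is $\widetilde{\Psi}_2$. At $\mu=\frac{d}{d-1}$ they give $\nu=-\frac{2}{d-1}$ and $\nu=0$, which are $\widetilde{\Psi}_1$ and $\widetilde{\Psi}_0$. The constraint $\mu\geq 0$ is therefore automatic, since the lower and upper bounds on $\nu$ cross exactly at $\mu=0$. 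So the feasible set is the triangle with these three vertices, namely ${\rm conv}\{\widetilde{\Psi}_0,\widetilde{\Psi}_1,\widetilde{\Psi}_2\}$.

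\textbf{Positive quadrilateral.} The positivity constraints are
\[
0\le\mu\le\tfrac{d}{d-1},\qquad -\tfrac{2}{d}\mu\le\nu\le\tfrac{d}{d-1}-\mu .
\]
This is a quadrilateral bounded by two vertical lines and two slanted lines. Its vertices are obtained by intersecting each vertical line with each slanted line:
\begin{itemize}
\item at $\mu=0$: $(0,0)=\widetilde{\mathcal{T}}_1$ and $(0,\frac{d}{d-1})=\widetilde{\mathcal{P}}$;
\item at $\mu=\frac{d}{d-1}$: $(\frac{d}{d-1},-\frac{2}{d-1})=\widetilde{\Psi}_1$ and $(\frac{d}{d-1},0)=\widetilde{\Psi}_0$.
\end{itemize}
Because the map $(\mu,\nu)\mapsto\Lambda_{\mu,\nu}$ is affine, convex hulls of parameter points correspond to convex hulls of maps. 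The CP triangle sits inside this quadrilateral: its vertices $\widetilde{\Psi}_0,\widetilde{\Psi}_1$ are vertices of the quadrilateral, and $\widetilde{\Psi}_2=(0,1)$ lies on the left edge.

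\textbf{The two identities.} Both follow from affineness applied to the coordinates.
\begin{itemize}
\item For $\widetilde{\Psi}_2$:
\[
\tfrac{1}{d}\big[(0,0)+(d-1)(0,\tfrac{d}{d-1})\big]=(0,1).
\]
Equivalently, at the level of maps, $\frac{1}{d}\big(X^{\rm T}+d\Delta(X)-X^{\rm T}\big)=\Delta(X)$.
\item For $\widetilde{\mathcal{T}}_2$, recall that $\Lambda_\mu$ corresponds to the point $(\mu,0)$. Then
\[
\tfrac{1}{d+1}\big[(d-1)(\tfrac{d}{d-1},-\tfrac{2}{d-1})+2(0,1)\big]=\big(\tfrac{d}{d+1},0\big),
\]
which is exactly the point of $\widetilde{\mathcal{T}}_2$.
\end{itemize}

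The only point requiring care is confirming that the vertices have been identified correctly and that each constraint is active at the claimed points. Everything else is elementary arithmetic, so I do not expect any genuine obstacle.
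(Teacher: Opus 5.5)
Your proposal is correct and matches the paper's approach. The paper gives no separate proof: the corollary is read off from the inequalities of the preceding Proposition by locating the vertices in the $(\mu,\nu)$-plane, and the two identities follow from affine arithmetic on the parameter points, exactly as you do. Your map-level check $\frac{1}{d}\big(X^{\rm T}+d\Delta(X)-X^{\rm T}\big)=\Delta(X)$ confirms the first identity.
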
   



\begin{figure}[htbp]
\centering
\begin{tikzpicture}
\begin{axis}[
    axis lines=middle,
    xmin=-0.10, xmax=1.65,     
    ymin=-0.95, ymax=1.45,     
    xlabel={$\,\mu$},
    ylabel={$\,\nu$},
    xtick=\empty,
    ytick=\empty,              
    width=0.52\textwidth,
    height=0.46\textwidth,
    clip=false,
]

\pgfmathsetmacro{\d}{4}                
\pgfmathsetmacro{\mud}{\d/(\d-1)}      
\pgfmathsetmacro{\numin}{-2/(\d-1)}    
\pgfmathsetmacro{\mutwo}{\d/(\d+1)}    

\pgfplotsset{
  xtick={0,\mud},
  xticklabels={$0$},
}


\addplot[
  fill=blue!18,
  draw=blue!65!black,
  thick
] coordinates {
  (\mud,0)        
  (\mud,\numin)   
  (0,0)           
  (0,\mud)        
  (\mud,0)
};

\addplot[
  fill=green!30,
  draw=green!55!black,
  thick
] coordinates {
  (\mud,0)        
  (\mud,\numin)   
  (0,1)           
  (\mud,0)
};

\addplot[black, thick, dashed] coordinates {(0,0) (\mud,0)};

\addplot[only marks,mark=*,mark size=2pt] coordinates {
  (0,0)              
  (\mud,0)           
  (\mud,\numin)      
  (0,1)              
  (0,\mud)           
  (\mutwo,0)         
};

\node[below left]  at (axis cs:0,0) {$\widetilde{\mathcal{T}}_1$};
\node[above right] at (axis cs:\mud,0) {$\widetilde\Psi_0$};
\node[below right] at (axis cs:\mud,\numin) {$\widetilde\Psi_1$};
\node[above left]  at (axis cs:0,1) {$\widetilde\Psi_2$};
\node[left]        at (axis cs:0,\mud) {$\widetilde{\mathcal{P}}$};

\node[below] at (axis cs:\mutwo,0) {$\widetilde{\mathcal{T}}_2$};


\end{axis}
\end{tikzpicture}
\caption{Illustration of Corollary~3.3: quadrilateral of positive maps containing a green triangle of CP maps. The Tomiyama map $\widetilde{\mathcal{T}}_2$ corresponding to $(\frac{d}{d+1},0)$ is a convex combination of $\widetilde{\Psi}_1$ and $\widetilde{\Psi}_2$.   \label{FIG4}}
\end{figure}
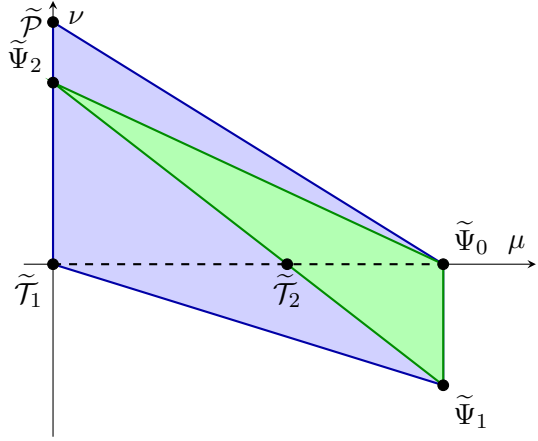


\section{Conclusions and outlook}   \label{SIV}

In this work, we analyzed two closely related two--parameter families of linear maps on matrix algebras, obtained as diagonal perturbations of the classical Tomiyama maps. Using the Choi matrix formalism combined with block--positivity techniques, we derived explicit necessary and sufficient conditions for positivity, complete positivity, and $k$--positivity in arbitrary dimension. In both families, the sets of completely positive maps admit a simple geometric description as triangles in the corresponding parameter spaces, while the sets of positive maps form larger convex regions containing these triangles.

For the family $\Phi_{\alpha,\beta}$, we showed that all positive maps are decomposable and identified the extremal points generating the positive region. This allowed us to formulate a conjecture describing the geometry of the $k$--positive region as a convex quadrilateral determined by the completely positive maps and the $k$--positive Tomiyama map. We proved this conjecture in several nontrivial cases, including $k=d-1$, and when $k$ divides the system dimension $d$, providing strong evidence for its validity in general.

A similar geometric picture emerges for the transposition--based family $\Lambda_{\mu,\nu}$. In this case, the completely positive region is generated by three explicitly identified maps, including the Holevo--Werner channel, while the positive region forms a quadrilateral containing the CP triangle. The appearance of Tomiyama maps as boundary points once again highlights their distinguished role in the structure of positive maps.

Several open problems remain. The full proof of the conjectured characterization of $k$--positive maps for arbitrary $k$ and $d$ is still missing.  It would also be interesting to investigate whether similar geometric descriptions persist for more general perturbations. Finally, the operational and entanglement--theoretic implications of the identified extremal maps, in particular their role as entanglement witnesses, deserve further study. Finally, it would be interesting to extend the analysis and study which maps $\Phi_{\alpha,\beta}$ and $\Lambda_{\mu,\nu}$ satisfy Schwarz inequality. Recently, it was shown how unital and trace-preserving $k$-positive maps may give rise to $k$-Kadison-Schwarz map \cite{F}. It would be interesting to generalize results of \cite{Tomiyama-II} for $k$-Kadison-Schwarz maps. 
Finally, it would be interesting to provide analysis of $k$-entanglement breaking maps generalizing recent results form \cite{kEB} .

\section*{Acknowledgements}

D.C. was supported by the Polish National Science
Centre project No. 2024/55/B/ST2/01781. A.B. acknowledges the support received for
this research from the research grant sanctioned
by the National Board for Higher Mathematics
(NBHM), Department of Atomic Energy (DAE),
Government of India, with sanction letter no:
02011/32/2025/NBHM(R.P)/R$\&$D II/9677; under seed money scheme from Birla Institute of Technology Mesra with sanction letter no: DRIE/SMS/DRIE-10917/2025-26/3857.

\appendix

\section{Appendix }


For $k=3$ and $d=k+1=4$ one obtains 
\[
A=\left[
\begin{array}{cccc|cccc|cccc}
 1 & -\tfrac13 & . & . & . & -\tfrac13 & -\tfrac13 & . & . & . & -\tfrac13 & -\tfrac13 \\
 -\tfrac13 & 1 & . & . & . & 1 & -\tfrac13 & . & . & . & -\tfrac13 & -\tfrac13 \\
 . & . & . & . & . & . & . & . & . & . & . & . \\
 . & . & . & . & . & . & . & . & . & . & . & . \\
 \hline
 . & . & . & . & . & . & . & . & . & . & . & . \\
 -\tfrac13 & 1 & . & . & . & 1 & -\tfrac13 & . & . & . & -\tfrac13 & -\tfrac13 \\
 -\tfrac13 & -\tfrac13 & . & . & . & -\tfrac13 & 1 & . & . & . & 1 & -\tfrac13 \\
 . & . & . & . & . & . & . & . & . & . & . & . \\
 \hline
 . & . & . & . & . & . & . & . & . & . & . & . \\
 . & . & . & . & . & . & . & . & . & . & . & . \\
 -\tfrac13 & -\tfrac13 & . & . & . & -\tfrac13 & 1 & . & . & . & 1 & -\tfrac13 \\
 -\tfrac13 & -\tfrac13 & . & . & . & -\tfrac13 & -\tfrac13 & . & . & . & -\tfrac13 & 1
\end{array}
\right]
\] and 
\[
B=\left[
\begin{array}{cccc|cccc|cccc}
 -\tfrac12 & \tfrac{2}{9} & . & . & \tfrac14 & \tfrac{2}{9} & \tfrac{2}{9} & . & . & . & \tfrac{2}{9} & \tfrac{2}{9} \\
 \tfrac{2}{9} & -\tfrac12 & . & . & . & -\tfrac34 & \tfrac{2}{9} & . & . & . & \tfrac{2}{9} & \tfrac{2}{9} \\
 . & . & \tfrac12 & . & . & . & \tfrac14 & . & . & . & . & . \\
 . & . & . & \tfrac12 & . & . & . & \tfrac14 & . & . & . & . \\
 \hline
 \tfrac14 & . & . & . & \tfrac12 & . & . & . & \tfrac14 & . & . & . \\
 \tfrac{2}{9} & -\tfrac34 & . & . & . & -\tfrac12 & \tfrac{2}{9} & . & . & \tfrac14 & \tfrac{2}{9} & \tfrac{2}{9} \\
 \tfrac{2}{9} & \tfrac{2}{9} & \tfrac14 & . & . & \tfrac{2}{9} & -\tfrac12 & . & . & . & -\tfrac34 & \tfrac{2}{9} \\
 . & . & . & \tfrac14 & . & . & . & \tfrac12 & . & . & . & \tfrac14 \\
 \hline
 . & . & . & . & \tfrac14 & . & . & . & \tfrac12 & . & . & . \\
 . & . & . & . & . & \tfrac14 & . & . & . & \tfrac12 & . & . \\
 \tfrac{2}{9} & \tfrac{2}{9} & . & . & . & \tfrac{2}{9} & -\tfrac34 & . & . & . & -\tfrac12 & \tfrac{2}{9} \\
 \tfrac{2}{9} & \tfrac{2}{9} & . & . & . & \tfrac{2}{9} & \tfrac{2}{9} & \tfrac14 & . & . & \tfrac{2}{9} & -\tfrac12
\end{array}
\right]
\]

\begin{equation*}
    \psi_1=(3,1,-1,1)^{\rm T} \ , \ \ \  \psi_2=(-2,2,2,-2)^{\rm T}
\end{equation*}
that is,

$$  \psi= (3,1,-1,1|-2,2,2,-2|1,-1,1,3)^{\rm T} \in \mathbb{C}^{12} . $$
For $k=4$ and $d=k+1=5$ one obtains 

\[
A=\left(
\begin{array}{ccccc|ccccc|ccccc|ccccc}
 1 & -\frac{1}{4} & . & . & . & . & -\frac{1}{4} & -\frac{1}{4} & . & . & . & . & -\frac{1}{4} & -\frac{1}{4} & . & . & . & . & -\frac{1}{4} & -\frac{1}{4} \\
 -\frac{1}{4} & 1 & . & . & . & . & 1 & -\frac{1}{4} & . & . & . & . & -\frac{1}{4} & -\frac{1}{4} & . & . & . & . & -\frac{1}{4} & -\frac{1}{4} \\
 . & . & . & . & . & . & . & . & . & . & . & . & . & . & . & . & . & . & . & . \\
 . & . & . & . & . & . & . & . & . & . & . & . & . & . & . & . & . & . & . & . \\
 . & . & . & . & . & . & . & . & . & . & . & . & . & . & . & . & . & . & . & . \\ \hline
 . & . & . & . & . & . & . & . & . & . & . & . & . & . & . & . & . & . & . & . \\
 -\frac{1}{4} & 1 & . & . & . & . & 1 & -\frac{1}{4} & . & . & . & . & -\frac{1}{4} & -\frac{1}{4} & . & . & . & . & -\frac{1}{4} & -\frac{1}{4} \\
 -\frac{1}{4} & -\frac{1}{4} & . & . & . & . & -\frac{1}{4} & 1 & . & . & . & . & 1 & -\frac{1}{4} & . & . & . & . & -\frac{1}{4} & -\frac{1}{4} \\
 . & . & . & . & . & . & . & . & . & . & . & . & . & . & . & . & . & . & . & . \\
 . & . & . & . & . & . & . & . & . & . & . & . & . & . & . & . & . & . & . & . \\ \hline
 . & . & . & . & . & . & . & . & . & . & . & . & . & . & . & . & . & . & . & . \\
 . & . & . & . & . & . & . & . & . & . & . & . & . & . & . & . & . & . & . & . \\
 -\frac{1}{4} & -\frac{1}{4} & . & . & . & . & -\frac{1}{4} & 1 & . & . & . & . & 1 & -\frac{1}{4} & . & . & . & . & -\frac{1}{4} & -\frac{1}{4} \\
 -\frac{1}{4} & -\frac{1}{4} & . & . & . & . & -\frac{1}{4} & -\frac{1}{4} & . & . & . & . & -\frac{1}{4} & 1 & . & . & . & . & 1 & -\frac{1}{4} \\
 . & . & . & . & . & . & . & . & . & . & . & . & . & . & . & . & . & . & . & . \\ \hline
 . & . & . & . & . & . & . & . & . & . & . & . & . & . & . & . & . & . & . & . \\
 . & . & . & . & . & . & . & . & . & . & . & . & . & . & . & . & . & . & . & . \\
 . & . & . & . & . & . & . & . & . & . & . & . & . & . & . & . & . & . & . & . \\
 -\frac{1}{4} & -\frac{1}{4} & . & . & . & . & -\frac{1}{4} & -\frac{1}{4} & . & . & . & . & -\frac{1}{4} & 1 & . & . & . & . & 1 & -\frac{1}{4} \\ 
 -\frac{1}{4} & -\frac{1}{4} & . & . & . & . & -\frac{1}{4} & -\frac{1}{4} & . & . & . & . & -\frac{1}{4} & -\frac{1}{4} & . & . & . & . & -\frac{1}{4} & 1 \\
\end{array}
\right)\]

\[B=\left(
\begin{array}{ccccc|ccccc|ccccc|ccccc}
 -\frac{3}{5} & \frac{3}{16} & . & . & . & \frac{1}{5} & \frac{3}{16} & \frac{3}{16} & . & . & . & . & \frac{3}{16} & \frac{3}{16} & . & . & . & . & \frac{3}{16} & \frac{3}{16} \\
 \frac{3}{16} & -\frac{3}{5} & . & . & . & . & -\frac{4}{5} & \frac{3}{16} & . & . & . & . & \frac{3}{16} & \frac{3}{16} & . & . & . & . & \frac{3}{16} & \frac{3}{16} \\
 . & . & \frac{2}{5} & . & . & . & . & \frac{1}{5} & . & . & . & . & . & . & . & . & . & . & . & . \\
 . & . & . & \frac{2}{5} & . & . & . & . & \frac{1}{5} & . & . & . & . & . & . & . & . & . & . & . \\
 . & . & . & . & \frac{2}{5} & . & . & . & . & \frac{1}{5} & . & . & . & . & . & . & . & . & . & . \\ \hline
 \frac{1}{5} & . & . & . & . & \frac{2}{5} & . & . & . & . & \frac{1}{5} & . & . & . & . & . & . & . & . & . \\
 \frac{3}{16} & -\frac{4}{5} & . & . & . & . & -\frac{3}{5} & \frac{3}{16} & . & . & . & \frac{1}{5} & \frac{3}{16} & \frac{3}{16} & . & . & . & . & \frac{3}{16} & \frac{3}{16} \\
 \frac{3}{16} & \frac{3}{16} & \frac{1}{5} & . & . & . & \frac{3}{16} & -\frac{3}{5} & . & . & . & . & -\frac{4}{5} & \frac{3}{16} & . & . & . & . & \frac{3}{16} & \frac{3}{16} \\
 . & . & . & \frac{1}{5} & . & . & . & . & \frac{2}{5} & . & . & . & . & \frac{1}{5} & . & . & . & . & . & . \\
 . & . & . & . & \frac{1}{5} & . & . & . & . & \frac{2}{5} & . & . & . & . & \frac{1}{5} & . & . & . & . & . \\ \hline
 . & . & . & . & . & \frac{1}{5} & . & . & . & . & \frac{2}{5} & . & . & . & . & \frac{1}{5} & . & . & . & . \\
 . & . & . & . & . & . & \frac{1}{5} & . & . & . & . & \frac{2}{5} & . & . & . & . & \frac{1}{5} & . & . & . \\
 \frac{3}{16} & \frac{3}{16} & . & . & . & . & \frac{3}{16} & -\frac{4}{5} & . & . & . & . & -\frac{3}{5} & \frac{3}{16} & . & . & . & \frac{1}{5} & \frac{3}{16} & \frac{3}{16} \\
 \frac{3}{16} & \frac{3}{16} & . & . & . & . & \frac{3}{16} & \frac{3}{16} & \frac{1}{5} & . & . & . & \frac{3}{16} & -\frac{3}{5} & . & . & . & . & -\frac{4}{5} & \frac{3}{16} \\
 . & . & . & . & . & . & . & . & . & \frac{1}{5} & . & . & . & . & \frac{2}{5} & . & . & . & . & \frac{1}{5} \\ \hline
 . & . & . & . & . & . & . & . & . & . & \frac{1}{5} & . & . & . & . & \frac{2}{5} & . & . & . & . \\
 . & . & . & . & . & . & . & . & . & . & . & \frac{1}{5} & . & . & . & . & \frac{2}{5} & . & . & . \\
 . & . & . & . & . & . & . & . & . & . & . & . & \frac{1}{5} & . & . & . & . & \frac{2}{5} & . & . \\
 \frac{3}{16} & \frac{3}{16} & . & . & . & . & \frac{3}{16} & \frac{3}{16} & . & . & . & . & \frac{3}{16} & -\frac{4}{5} & . & . & . & . & -\frac{3}{5} & \frac{3}{16} \\
 \frac{3}{16} & \frac{3}{16} & . & . & . & . & \frac{3}{16} & \frac{3}{16} & . & . & . & . & \frac{3}{16} & \frac{3}{16} & \frac{1}{5} & . & . & . & \frac{3}{16} & -\frac{3}{5} \\
\end{array}
\right)\]

\begin{equation*}
    \psi_1=(4,1,-1,1,-1)^{\rm T} \ , \ \ \  \psi_2=(-3,3,2,-2,2)^{\rm T}  \ , 
\end{equation*}
that is,

$$  \psi= (4,1,-1,1,-1|-3,3,2,-2,2|2,-2,2,3,-3|-1,1,-1,1,4)^{\rm T} \in \mathbb{C}^{20} . $$


\end{document}